\newtheoremstyle{mythm}                   %change thm environm. 
{6pt}%space above
{6pt}%space below
{\it}%body font
{}%indent amount
{\bf}%theorem head font
{.}%punctation after theorem head
{.5em}%space after thm head
{}%thm head spec
\newtheoremstyle{mydef}                   %change def environm. 
{6pt}%space above
{6pt}%space below
{}%body font
{}%indent amount
{\bf}%theorem head font
{.}%punctation after theorem head
{.5em}%space after thm head
{}%thm head spec
\newtheoremstyle{myrem}                   %change rem environm. 
{6pt}%space above
{6pt}%space below
{}%body font
{}%indent amount
{\bf}%theorem head font
{.}%punctation after theorem head
{.5em}%space after thm head
{}%thm head spec
\theoremstyle{mythm}      
\newtheorem{theorem}{Theorem}[section]
\newtheorem{proposition}[theorem]{Proposition}
\newtheorem{lemma}[theorem]{Lemma}
\theoremstyle{mydef}      
\newtheorem{definition}[theorem]{Definition}
\theoremstyle{myrem}
\newtheorem{remark}[theorem]{Remark}
\numberwithin{equation}{section}
\newcounter{ithmcount}
\newenvironment{ithm}{\begin{list}{{\rm \alph{ithmcount})}}{\usecounter{ithmcount}\labelwidth18pt
      \leftmargin18pt \topsep3pt \itemsep1pt \parsep2pt}}{\end{list}}
\newenvironment{items}{
\begin{list}{$\alph{item})$}
{\labelwidth18pt \leftmargin20pt \topsep3pt \itemsep5pt \parsep0pt}}
{\end{list}}
\subjclass[2000]{}
\newcommand{\Aut}{{\rm Aut}}
\newcommand{\GL}{{\rm GL}}  
\newcommand{\Soc}{{\rm soc}}
\renewcommand{\leq}{\leqslant}
\begin{document}

\vspace*{-1.4cm}
   
\title{Isomorphism testing of groups of cube-free order}
\subjclass[2000]{}
\author[H. Dietrich]{Heiko Dietrich}
\address{School of Mathematics, Monash University, 
Clayton VIC 3800, Australia}
\email{Heiko,Dietrich@Monash.Edu}
\author[J.B. Wilson]{James B.\ Wilson}
\address{Department of Mathematics, Colorado Sate University, Fort Collins Colorado, 80523, USA}
\email{James.Wilson@ColoState.Edu}

\keywords{finite groups, cube-free groups, group isomorphisms}
\date{\today}

\begin{abstract} A group $G$ has cube-free order if no prime to the third power divides $|G|$. We describe an algorithm that given two cube-free groups $G$ and $H$ of known order, decides whether $G\cong H$, and, if so, constructs an isomorphism $G\to H$. If the groups are input as permutation groups, then our algorithm runs in time polynomial in the input size, improving on the previous super-polynomial bound. An implementation of our algorithm is provided for the computer algebra system {\sf GAP}. \bigskip
  
\begin{center}
  \small \emph{In memory of C.C.\ Sims.}
\end{center}
\end{abstract}

\thanks{This research was partially supported by the Simons Foundation, the Mathematisches Forschungsinstitut Oberwolfach, and  NSF grant DMS-1620454. 
The first author thanks the Lehrstuhl D of the RWTH Aachen for the great hospitality during his Simon Visiting Professorship in Summer 2016. Both authors thank Alexander Hulpke for advice on conjugacy problems.}

\maketitle

%%\tableofcontents

\vspace*{-0.6cm}

%%%%%%%%%%%%%%%%%%%%%%%%%%%%%%%%%%%%%%%%%%%%%%%%%%%%%%%%%%%%%%%%%%%%%%%%%%%%%
\section{Introduction}

Capturing the natural concept of symmetry, groups are one the most prominent algebraic 
structures in science. Yet, it is still a challenge to decide whether two finite groups are isomorphic. Despite abundant knowledge 
about groups, presently no one has provided an isomorphism test for all finite groups 
whose complexity  improves substantively over brute-force.  In the most general form
there is no known polynomial-time isomorphism test even for non-deterministic Turing machines, that is,
the problem may lie outside the complexity classes NP and co-NP (see \cite{BS84}*{Corollary~4.9}).
At the time of this writing, the available implementations of algorithms that test 
isomorphism on broad classes of groups can run out of memory or run for days on examples 
of orders only a few thousand, see \cite{BMW}*{Section~1.1} and Table~1.   To 
isolate the critical difficulties in group isomorphism, it helps to consider special 
classes of groups as has been done recently in 
{\citelist{\cite{BQ:tower}\cite{BCGQ}\cite{BMW}\cite{cf}\cite{PART1}\cite{Wilson:profile}}}.

{This paper is a part of a larger project intended to describe for which orders of groups is group isomorphism tractable: details of this project are given in  \cite{PART1}. In particular, in \cite{PART1} we have described} polynomial-time algorithms for isomorphism testing of  abelian and meta-cyclic groups of \emph{most} orders; the computational framework for these algorithms is built upon type theory and groups of so-called black-box type. By a theorem of H\"older (\cite{rob}*{10.1.10}), all groups of square-free order are coprime meta-cyclic, that is, they can be decomposed as $G=A\ltimes B$ where $A,B\leq G$ are cyclic subgroups of coprime orders; unfortunately, {\cite[Theorem 1.2]{PART1}} is not guaranteed for all square-free orders. In this paper, we switch to a more restrictive computational model, allowing us to make progress for isomorphism
testing of square-free and cube-free groups.  Specifically, here  we consider groups generated by a set $S$ of permutations on a finite set $\Omega$.  That  gives us access to a robust family of 
algorithms by Sims and many others (see \citelist{\cite{handbook}\cite{Seress}})
that run in time polynomial in $|\Omega|\cdot |S|$. Note that the order of such a group 
$G$ can be exponential in $|\Omega|\cdot |S|$, even when restricted to groups of square-free order, see Proposition~\ref{prop:degree}. The main result of this paper is the following theorem.
 
\begin{theorem}\label{thm:main} 
There is an algorithm that given groups $G$ and $H$ of permutations on
finitely many points, decides whether they are of cube-free order, and if so, decides that $G\not\cong H$ or constructs an isomorphism $G\to H$. The algorithm runs in time polynomial in the input size.  
\end{theorem}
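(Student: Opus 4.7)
The plan is to reduce cube-free isomorphism testing to a sequence of standard polynomial-time permutation-group tasks (Schreier--Sims, orbits, normalizers, composition/chief series) combined with the very restricted structure of cube-free groups. I would first apply Sims's algorithms to compute strong generating sets and orders of $G$ and $H$, factor these (trivially, since primes dividing $|G|$ are at most $|\Omega|$), and verify that every prime appears with exponent at most $2$; otherwise return ``not cube-free''. Assuming $|G|=|H|$ is cube-free, every Sylow subgroup has order $p$ or $p^{2}$ and is abelian, isomorphic to $\mathbb{Z}_p$, $\mathbb{Z}_{p^2}$, or $\mathbb{Z}_p\times\mathbb{Z}_p$, which is the main structural leverage throughout.

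Next I would compute the solvable radical $R(G)$ and the nonsolvable quotient $G/R(G)$: both are accessible in polynomial time by standard permutation-group methods. The quotient $G/R(G)$ is a direct product of nonabelian simple groups of cube-free order, a tightly restricted list (essentially $A_5$ and certain $\mathrm{PSL}_2(q)$). Using constructive recognition of simple groups I would identify, and build an explicit isomorphism between, the nonsolvable quotients of $G$ and $H$, matching the simple factors one by one or reporting ``not isomorphic'' as soon as the factor multisets fail to agree.

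For the solvable part, I would compute a characteristic chief series of $R(G)$ with elementary abelian factors and handle the isomorphism inductively along it. At each step the task is an extension problem: given isomorphisms between a normal subgroup $N$ and a quotient $Q$ on the two sides, decide whether they lift to an isomorphism of the full extensions. This is cohomological and reduces to testing conjugacy under $\Aut(N)\times\Aut(Q)$ on a finite set of extension classes. Since the $p$-local pieces of $N$ are modules of rank at most $2$ over $\mathbb{F}_p$ or $\mathbb{Z}/p^{2}$, their automorphism groups are explicit linear groups ($\GL_1$ or $\GL_2$), and the requisite orbit/conjugacy computations reduce to linear algebra over these small rings.

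The main obstacle, as always in isomorphism testing, is the extension/lifting step: coordinating the lifts across the many primes dividing $|G|$ and across the join of solvable and nonsolvable parts, without the search blowing up. Here the cube-free hypothesis is decisive: at each prime the relevant module category, cohomology, and automorphism group are tiny and prescribed, and Hall systems in $R(G)$ let us process primes essentially independently. Combined with the restricted list of possible simple quotients (with well-understood outer automorphism groups controlling how they can act on $R(G)$), this bounds the combinatorial bookkeeping and keeps the overall cost polynomial in the input length $|\Omega|\cdot|S|$, despite $|G|$ being potentially exponential in that quantity.
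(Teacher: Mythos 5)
There is a genuine gap, and it sits exactly where you yourself flag the ``main obstacle'': the inductive lifting along a chief series. Your plan is the generic one -- at each layer compare extensions of $Q$ by $N$ by computing orbits of (compatible pairs in) $\Aut(N)\times\Aut(Q)$ on extension classes -- but you give no argument that this is polynomial time, and none is known: it requires generators for $\Aut(Q)$ of an intermediate quotient (a problem at least as hard as isomorphism testing, not known to be polynomial even for solvable permutation groups) and then an orbit computation on $H^2(Q,N)$ under a group that can be enormous; the cube-free hypothesis makes $\Aut$ of the \emph{layer} small ($\GL_1$ or $\GL_2$ pieces), but not $\Aut(Q)$, and ``tiny cohomology'' is asserted rather than proved. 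The paper avoids this entire mechanism using two specific facts about cube-free groups that your proposal does not invoke: (i) a theorem of Gasch\"utz, by which two solvable Frattini-free cube-free groups $K\ltimes\Soc$ and $\tilde K\ltimes\Soc$ are isomorphic if and only if $K$ and $\tilde K$ are conjugate in $\Aut(\Soc)=\prod\GL_1(p_i)\times\prod\GL_2(p_j)$, so the Frattini-free case becomes a subgroup-conjugacy problem; and (ii) uniqueness of the cube-free Frattini extension ($L$ is determined up to isomorphism by $L/\Phi(L)$), so once the Frattini quotients are matched there is no extension-class comparison at all -- the lift is \emph{constructed} explicitly via compatible Sylow towers, never searched for among cohomology classes.

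A second concrete problem is your claim that Hall systems let you ``process primes essentially independently.'' They do not: the complement $K$ to the socle is a \emph{subdirect} product inside $\prod\GL_1(p_i)\times\prod\GL_2(p_j)$, so even after conjugating each projection $K_j$ onto $\tilde K_j$ (which is the easy, prime-local part, handled in the paper via Gierster's classification of cube-free subgroups of $\GL_2(p)$) one still faces a single global conjugacy problem coupling all primes. The paper resolves this by showing the relevant normalizer quotients are solvable and then invoking the Luks--Miyazaki polynomial-time conjugacy algorithm for solvable groups -- a nontrivial ingredient with no counterpart in your sketch. Your reduction of the nonsolvable part to matching simple quotients of $G/R(G)$ is essentially fine (for cube-free groups the simple factor is a single $\mathrm{PSL}_2(p)$ and even a direct factor, which the paper extracts as $G^{(3)}$), but as written the solvable lifting and the cross-prime conjugacy steps are not established to run in polynomial time, so the proposed proof does not go through.
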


Theorem \ref{thm:main} is based on the structure analysis of cube-free groups  by Eick \& Dietrich \cite{cf} and  Qiao \& Li \cite{cf2}. A top-level description of our algorithm is given in Section \ref{sec:summary}. Importantly, our algorithm translates to a functioning implementation for the system \textsf{GAP} \cite{gap}, in the package ``Cubefree''~\cite{cube-free}. As a side-product, we also discuss algorithms related to the  construction of complements of $\Omega$-groups, Sylow towers, socles, and constructive presentations, see Section \ref{sec:prelres}. These algorithms have applications beyond cube-free groups and might be of general interest in computational group theory.

\subsection{Limitations}
In contrast to our work in {\cite{PART1}},  Theorem \ref{thm:main} no longer applies to a dense set 
of orders: the density of positive integers $n$ which are square-free and  cube-free tends 
to $1/\zeta(2)\approx 0.61$ and  $1/\zeta(3)\approx 0.83$, respectively,
where $\zeta(x)$ is the Riemann $\zeta$-function, see \cite{num}*{(2)}. It is known that most isomorphism types of groups 
accumulate at orders with large prime-power divisors.  Indeed,  Higman, Sims, and Pyber 
\cite{BNV:enum} proved that the number of groups of order $n$, up to isomorphism, tends to 
$n^{2\mu(n)^2/27+O(\log n)}$ where $\mu(n)=\max\{k : n \text{ is not $k$-free}\}$. 
Specifically, the number of pairwise non-isomorphic groups of a cube-free order $n$ is 
not more than $O(n^8)$, with speculation that the tight bound is $o(n^2)$, see \cite{BNV:enum}*{p.~236}. 
The prevailing belief in works like \citelist{\cite{BCGQ}\cite{Wilson:profile}}
is that the difficult instances of group isomorphism
are when $\mu(n)$ is unbounded, especially when $n$ is a prime power. 
Isomorphism testing of finite $p$-groups is indeed a research area that has attracted a lot of attention.

However, 
Theorem \ref{thm:main} completely handles an easily described family of group orders which 
may make it easier to use in applications.  A further point is that groups of cube-free order 
exhibit many of the fundamental components of finite groups.  For instance, groups of cube-free
order need not be solvable, to wit the simple alternating group $A_5$ has cube-free order $60$.  When decomposed into canonical 
series, such as the Fitting series, the associated extensions have nontrivial 
first and second cohomology groups -- a measure of how difficult it is to compare different extensions.

\subsection{Structure of the paper}
In Section \ref{sec:prelim} we introduce some notation and comment on the computational model for our algorithm. In Section \ref{secStructure} we recall the structure of cube-free groups and give a top-level description of our isomorphism test. Various preliminary algorithms (for example, related to the construction of Sylow bases and towers, $\Omega$-complements, socles, and constructive presentations) are described in Section~\ref{sec:prelres}. The proof of the main theorem is broken up into three progressively more general families: the solvable Frattini-free case (Section~\ref{secsolvFF}), the general solvable case (Section~\ref{sec:solv}), and finally the general case (Section~\ref{sec:general}).   We have implemented many aspects of this algorithm in the computer algebra system {\sf GAP} and comment on some examples in Section~\ref{sec:ex}.

%%%%%%%%%%%%%%%%%%%%%%%%%%%%%%%
\section{Notation and computational model}\label{sec:prelim}

\subsection{Notation.} We reserve $p$ for prime numbers and $n$ for group orders.  
For a positive integer $n$ we denote by $C_n$ a
cyclic group of order $n$, and $\mathbb{Z}/n$ for the explicit encoding as 
integers, in which we are further permitted to treat the structure as a
ring.  Let $(\mathbb{Z}/n)^{\times}$ denote the units of this ring.
Direct products of groups are denoted variously by
``$\times$'' or exponents. Throughout, $\mathbb{F}_{q}$ is a field of order $q$ 
and ${\rm GL}_d(q)$ is the group of invertible $(d\times d)$-
matrices over $\mathbb{F}_{q}$. The group ${\rm PSL}_d(q)$ 
consists of matrices of determinant $1$ modulo scalar matrices.

For a group $G$ and  $g,h\in G$, conjugates and commutators are $g^h=h^{-1}g h$ and 
$[g,h]=g^{-1} g^h$, respectively. For subsets $X,Y\subset G$ let
$[X,Y]=\langle [x,y]:x\in X, y\in Y\rangle$; the centralizer and normalizer of $X$ in $G$ are  $C_G(X)=\{g\in G: [X,g]=1\}$ and $N_G(X)=\{g\in G: [X,g]\subseteq X\}$, respectively.
The derived series of $G$ has terms  $G^{(n+1)}=[G^{(n)},G^{(n)}]$ for 
$n\geq 1$, with $G^{(1)}=G$. We read group extensions from the right and use 
$A\ltimes B$ for split extensions; we also write 
$A\ltimes_\varphi B$ to emphasize the action $\varphi\colon A\to \Aut(B)$. 
Hence, $A\ltimes B\ltimes C\ltimes D$ stands for 
$((A\ltimes B)\ltimes C)\ltimes D$, etc.

We mostly adhere to protocol set out in standard literature on computational group theory, such as the Handbook of Computation Group Theory \cite{handbook} and the books of Robinson \cite{rob} and Seress \cite{Seress}.

\subsection{Computational model}\label{secCF}
Throughout  we assume that groups are given as finite permutation groups, but it  is permissible 
to include  congruences, which are best described as quotients of permutation groups. This allows us to prove that the algorithm of Theorem \ref{thm:main} runs in polynomial time in the input size. Proving the same for  groups given by polycyclic presentations seems difficult, partly because of the challenges involving {\it collection}, see~\cite{collection}. {\bfseries\itshape Convention:} when we say that an algorithm runs in polynomial time, then this is to be understood to be in time polynomial in the input size, assuming that the groups are input as  (quotients of) finite permutation groups.

One 
simple but critical implication of our computational model  is that if a prime $p$ divides the group 
order $|G|$, then $p$ divides~$d!$, where $d$ is the size of the permutation domain; so 
$p\leq d$, which is less than the input size for $G$.  This shows that all primes dividing 
the group order are \emph{small}, allowing for polynomial-time factorization and other 
relevant number theory. Moreover, many essential group theoretic structures of groups of 
permutations (and their quotients) can be computed in polynomial time, as outlined in 
\cite{Seress}*{p.\ 49} and \cite{KL:quo}*{Section~4}.  For example, it is
possible to compute group orders, to produce constructive presentations, and 
to test membership constructively. For solvable permutation groups one can also efficiently 
get  a constructive polycyclic presentation (see Lemma~\ref{lem:permpc}). %Because of the availability of all these tools, many of the technical arguments required for our study of groups of black-box type  in \heiko{\cite{PART1}} are muted in this environment, and accordingly we relax the specificity of the algorithms to focus on the group theory arguments we require.

Before we begin, we demonstrate that the assumption that our groups are input by permutations
is not an automatic improvement in the complexity.  In particular, we
show that large groups of square-free (and so also cube-free) order can arise 
as permutation groups in small degrees. 

\begin{proposition}\label{prop:degree} 
Let $G$ be a square-free group of order $n=p_1\cdots p_{\ell}$, with each $p_i$ prime.
The group $G$ can be faithfully represented in a permutation group of degree $p_1+\cdots+p_{\ell}$.
For infinitely many square-free $m$, there is a faithful
permutation representation of the groups of order $m$ on $O(\log^2 m)$~points. 
\end{proposition}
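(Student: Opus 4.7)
The plan is to attack the two claims separately, with part (1) providing the permutation representation used in part (2).

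For part (1), I would exploit that every group of square-free order is solvable -- this is immediate from H\"older's structure result already cited in the excerpt, since metacyclic groups are solvable. By P.~Hall's theorem, for each prime $p_i\mid n$, the group $G$ has a Hall $p_i'$-subgroup $H_i$ of order $n/p_i$, hence of index $p_i$. The coset action of $G$ on $G/H_i$ is a homomorphism into $\mathrm{Sym}(G/H_i)\cong S_{p_i}$ whose kernel is $\mathrm{Core}_G(H_i)=\bigcap_{g\in G} H_i^g$. Combining these $\ell$ actions into the action on the disjoint union $\bigsqcup_{i=1}^{\ell} G/H_i$ yields a permutation representation of degree $p_1+\cdots+p_\ell$, and its kernel is $K=\bigcap_i \mathrm{Core}_G(H_i)$.

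The main step is then verifying that $K=1$. Since $K\leq H_i$ for every $i$, Lagrange's theorem gives $p_i=[G:H_i]\mid [G:K]$ for all $i$, and because the $p_i$ are pairwise distinct primes we conclude $n=p_1\cdots p_\ell\mid [G:K]$, forcing $[G:K]=n$ and $K=1$. This is the only really substantive point of the first part and is not hard; I do not anticipate a real obstacle here.

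For part (2), I would specialise to primorials $m_\ell=p_1 p_2\cdots p_\ell$, which are certainly square-free and infinite in number. By the prime number theorem, $\log m_\ell=\theta(p_\ell)\sim p_\ell$, while a second standard PNT estimate gives
\[
p_1+\cdots+p_\ell \;=\; \sum_{p\leq p_\ell} p \;\sim\; \frac{p_\ell^2}{2\log p_\ell}.
\]
Combining these asymptotics yields $p_1+\cdots+p_\ell \sim (\log m_\ell)^2/(2\log\log m_\ell)$, which is $O(\log^2 m_\ell)$. Applying the representation of part~(1) to any group of order $m_\ell$ then provides a faithful permutation representation of degree $O(\log^2 m_\ell)$, completing the proof. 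The only possible subtlety is to cite (rather than reprove) the asymptotic $\sum_{p\leq x}p\sim x^2/(2\log x)$, but this is a well-known consequence of the prime number theorem and requires no new work.
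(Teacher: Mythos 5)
Your argument is correct, but your proof of the first claim takes a genuinely different route from the paper's. The paper works directly from H\"older's classification: it writes $G\cong C_a\ltimes C_b$, splits off the part of $C_a$ that centralizes $C_b$ so that one may assume the action is faithful, represents $C_b$ on disjoint $p_i$-cycles, and observes that the faithful action of $C_a$ on $C_b$ can be realized on those same cycles; this even yields the slightly stronger degree bound $p_{s+1}+\cdots+p_\ell$ (only the primes dividing $b$ contribute). You instead use only solvability (which you correctly extract from H\"older's result) plus P.~Hall's theorem to get Hall $p_i'$-subgroups $H_i$ of index $p_i$, act on the disjoint union of the coset spaces $G/H_i$, and kill the kernel by the Lagrange/divisibility argument $p_i\mid [G:K]$ for all $i$, hence $K=1$; this is a clean and fully correct argument that avoids any explicit handling of the semidirect product structure, at the cost of a marginally larger (but still as-stated) degree $p_1+\cdots+p_\ell$, and it generalizes verbatim to any solvable group of square-free order without invoking the metacyclic normal form. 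For the second claim both you and the paper specialize to primorials; you invoke the standard prime number theorem asymptotics $\theta(p_\ell)\sim p_\ell$ and $\sum_{p\leq x}p\sim x^2/(2\log x)$, where the paper cites explicit effective bounds of Rosser--Schoenfeld and Massias--Robin, so this part is essentially the same argument with interchangeable references, and your combination $p_1+\cdots+p_\ell\sim(\log m_\ell)^2/(2\log\log m_\ell)$ indeed gives the required $O(\log^2 m)$ bound.
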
 
\begin{proof} 
H\"older's classification \cite{rob}*{(10.1.10)} shows that $G\cong C_a\ltimes C_b$ 
with $n=ab$. Since $a$ is square-free, all subgroups of $C_a$ are direct factors, thus
$C_a=C_d\times C_{C_a}(C_b)$ for a subgroup $C_d$, and $C_a\ltimes C_b=C_d\ltimes C_e$ 
where the centralizer in $C_d$ of $C_e$ is trivial. Thus, 
we can assume  that $C_a\ltimes C_b$ with $C_a$ acting faithfully on $C_b$, and 
$a=p_{1}\cdots p_s$ and $b=p_{s+1}\cdots p_{\ell}$.  Using disjoint $p_i$-cycles for 
each $i>s$, we faithfully represent $C_b$ on  $p_{s+1}+\cdots +p_{\ell}$ points.  
Since $C_a$ acts faithfully on $C_b$, that representation can be given on the disjoint 
cycles of $C_b$, that is, $C_a\ltimes C_b$ is faithfully represented on $p_{s+1}+\cdots +p_{\ell}$ 
points. The first claim follows. For the last observation, let $m=r_1\cdots r_\ell$ be the 
product of the first   $\ell$-primes.  These primorials have asymptotic growth 
$m\in \exp((1+\Theta(1))\ell\log \ell)$, see \cite{euler}*{(3.16)}.  Meanwhile, as just shown, 
the groups of order $m$ can all be represented faithfully on as few as $r_1+\cdots +r_{\ell}$ 
points, and $r_1+\ldots+r_\ell\in \Omega(\ell^2\log \ell)$ by  \cite{MR:primes}*{Theorem~C}.  
\end{proof}

%%%%%%%%%%%%
\section{Summary of the algorithm}\label{secStructure}

\subsection{Structure of cube-free groups} For a finite 
group~$G$ we denote by $\Phi(G)$ and $\Soc(G)$ its  {\em Frattini} subgroup and 
its {\em socle}, respectively; the first is the intersection of all maximal 
subgroups of $G$, and the latter is  the subgroup generated by all minimal 
normal subgroups. We write $G_\Phi$ for the Frattini quotient $G/\Phi(G)$. A group is {\em Frattini-free} if $\Phi(G)=1$; in particular, 
$G_\Phi$ is Frattini-free. By \cite{cf}, every group $G$ of cube-free order can be decomposed as
\[G=A\times L\]
where $A$ is trivial or $A=\text{PSL}_2(p)$ for a prime $p>3$ with $p\pm 1$ cube-free, and $L$ is solvable with abelian Frattini subgroup $\Phi(L)=\Phi(G)$ whose order is square-free and divides the order of the Frattini quotient $L_\Phi=L/\Phi(L)$. The latter satisfies 
\[L_\Phi = K\ltimes (B\times C)\]
where $\Soc(L_\Phi)=B\times C$ is the socle of $L_\Phi$ with 
\[B=\prod\nolimits_{i=1}^s \mathbb{Z}/{p_i}\quad\text{and}\quad C=\prod\nolimits_{j=s+1}^m  (\mathbb{Z}/{p_j})^2\]
for distinct primes $p_1,\ldots,p_m$.  Let $X\Yup Y$ denote a \emph{subdirect product}, that is,  a subgroup of $X\times Y$
whose projections to $X$ and $Y$ are surjective.  With this notation, we have 
\[K=K_1\Yup\ldots\Yup K_m \leq \Aut(B\times C) =\prod\nolimits_{i=1}^s \GL_1(p_i)\times \prod\nolimits_{j=s+1}^m \GL_2(p_j);\]
 It follows from work of Gasch\"utz (see \cite[Lemma 9]{cf}) that two solvable Frattini-free groups $K\ltimes (B\times C)$ and $\tilde K\ltimes (B\times C)$ with $K,\tilde K\leq\Aut(B\times C)$ as above are isomorphic if and only if $K$ and $\tilde K$ are conjugate in $\Aut(B\times C)$; this is one of the reasons why our proposed isomorphism algorithm works so efficiently.  Lastly, we recall that $L$ is determined by $L_\Phi$: there exists, up to isomorphism, a unique extension $M$ of $L_\Phi$ by $\Phi(L)$ such that $\Phi(M)\cong \Phi(L)$ and $M/\Phi(M)\cong L_\Phi$, see \cite[Theorem 11]{cf}.

 \begin{remark}\label{remMeta}
 Taunt \cite{taunt} was probably the first who considered the class of cube-free groups. 
The focus in the work of Dietrich \& Eick  \cite{cf} is on a construction algorithm for all cube-free groups of a fixed order, 
up to isomorphism; the approach is based on the so-called \emph{Frattini extension method} 
(see \cite{handbook}*{\S 11.4.1}). Complimentary to this work,  Qiao \& Li \cite{cf2} also analyzed 
the structure of cube-free groups. They proved in \cite{cf2}*{Theorem~1.1} that for every group $G$ 
of cube-free order there exist integers $a,b,c,d>0$ such that $G$ is isomorphic to
\[\begin{array}{ll}
  (C_c\times C_d^2)\ltimes (C_a\times C_b^2)&\text{ or}\\[0.5ex]
G_2\ltimes (C_c\times C_d^2)\ltimes (C_a\times C_b^2)&\text{ with $G_2\leq G$ a Sylow 2-subgroup, or}\\[0.5ex]
\text{PSL}_2(p)\times (C_c\times C_d^2)\ltimes (C_a\times C_b^2)&\text{ for some prime $p$.}
\end{array}\]
Left unclassified in this description are the relevant actions of the semidirect products, and a 
classification up to isomorphism.  As we have shown  in {\cite{PART1}*{Section 4}}, even for meta-cyclic groups, recovering the 
appropriate actions and comparing them is in general not easy.
\end{remark}

Among the implications of these decomposition results is that a solvable group $G$ of 
cube-free order has a Sylow tower, that is, a normal series such that each section is 
isomorphic to a Sylow subgroup of $G$, cf.\ \cite{cf2}*{Corollary~3.4 \& Theorem~3.9}.

%%%%%%%%%%%%%%%%%%%%%%%%%%%%%%%%%%%%%%%%%%%%%%%%%%%%%%%%%%%%%%%%%%%%%%%%%%%%%
\subsection{The algorithm}\label{sec:summary}

Let $G$ and $\tilde G$ be cube-free groups. We now describe the main steps of our algorithm to  construct an isomorphism  $G\to \tilde G$, which fails if and only if $G\not\cong\tilde G$. Our approach is to determine, for each group, the Frattini extension structure as described in Section \ref{secStructure}.  Since our groups are input by permutations, it is possible to 
decide if $|G|=|\tilde{G}|$  and also to factorize this order.  It simplifies 
our treatment to assume that the groups are of the same order and that the prime factors of
this order are known. First, for $G$ (and similarly for $\tilde G$) we do the following:

\smallskip

\begin{enumerate}
\item[(i)] Decompose $G=A\times L$ with $A=1$ or $A=\text{PSL}_2(p)$ simple, and $L$ solvable.
\item[(ii)] Compute the Frattini subgroup $\Phi(L)$ and the Frattini quotient $L_\Phi=L/\Phi(L)$.
\item[(iii)] Compute $\Soc(L_\Phi)=B\times C$ and $K\leq\Aut(B\times C)$ such that $L_\Phi=K\ltimes (B\times C)$.
\end{enumerate}

\smallskip

Then we proceed as follows; if one of these steps fails, then $G\not\cong \tilde G$ is established:

\smallskip

\begin{enumerate}
\item Construct an isomorphism $\psi_A\colon A\to \tilde A$.
\item Construct an isomorphism $\psi_\Phi\colon L_\Phi\to \tilde L_\Phi$.
\item Extend $\psi_\Phi$ to an isomorphism $\psi_L\colon L\to \tilde L$.
\item Combine $\psi_A$ and $\psi_L$ to  an isomorphism $\psi\colon G\to \tilde G$. 
\end{enumerate}

\medskip

In fact, $G$ and $\tilde G$ are isomorphic if and only if we succeed in Steps (1) \& (2). Thus, if we just want to decide whether $G\cong \tilde G$, then Steps (3) \& (4) need not to be carried out; moreover, it is not necessary to construct $\psi_A$: since $A$ and $\tilde A$ are groups of type $\text{PSL}_2$, we have $A\cong \tilde A$  if and only if $|A|=|\tilde A|$, which can be readily determined in our computational framework.

%%%%%%%%%%%%%%%%%%%%%%%%%%%%%%%%%%%%%%%%%%%%%%%%%%%%%%%%%%%%%%%%%%%%%%%%%%%%%

%%%%%%%%%%%%%%%%%%%%%%%%%%%%%%%%%%%%%%%%%%%%%%%%%%%%%%
\section{Preliminary algorithms}\label{sec:prelres}

We list a few  algorithms which are required later. One important result is the description of 
an algorithm to construct an abelian Sylow tower for a solvable group, if it exists.  This is a key ingredient in \cite{BQ:tower}, but in that work groups are input as  multiplication tables; in our setting multiplication tables might be exponentially larger than the input, so we cannot use this work. 
  
%%%%%%%%%%% 
\subsection{Constructive presentations and $\Omega$-complements}
Let $\Omega$ be a set. An \emph{$\Omega$-group} is a group $G$ on which the set $\Omega$ acts via a prescribed map $\theta\colon \Omega\to \Aut(G)$. We first investigate the problem {\sf $\Omega$-ComplementAbelian}: given an abelian normal $\Omega$-subgroup $M\leq G$, decide whether $G=K\ltimes M$ for some $\Omega$-subgroup $K\leq G$, or certify that no such $K$ exists. Variations on this problem have been discussed in several places; the version we describe is based on a proof in \cite{Wilson:RemakII}*{Proposition~4.5} which extends independent proofs by Luks and Wright in lectures
at the U.\ Oregon.

We show in Proposition \ref{prop:complement}  that {\sf $\Omega$-ComplementAbelian} has a  polynomial time solution for solvable groups. The proof involves Luks' \emph{constructive presentations} \cite{Luks:mat}*{Section 4.2}, which will also be useful later to equip solvable permutation groups with  polycyclic presentations, see Lemma \ref{lem:permpc}.
 
\begin{definition}\label{def:pres}
Let $G$ be a group and $N\unlhd G$. A {\em constructive presentation} of a group $G/N$ 
is a free group $F_X$ on a set $X$, a homomorphism $\phi\colon F_X\to G$, a function 
$\psi\colon G\to F_X$, and a set $R\subset F_X$ such that $g^{-1}(g\psi\phi)\in N$ for every $g\in G$, and  $N\phi^{-1}=\langle R^{F_X}\rangle$, the normal closure of $\langle R\rangle$ in $F_X$.
\end{definition}
This can be interpreted as follows: $\langle X\mid R\rangle$ is a generator-relator presentation of the group 
$G/N$, see \cite{Luks:mat}*{Lemma 4.1}; the homomorphism $\phi$ is defined by assigning  the generators $X$ of $F_X$  
to the generating set $S\subset G$. The function $\psi$ is in general not a homomorphism, and serves to writes elements of $G$ as a corresponding word in $X$. The next lemma  discusses a constructive presentation for a subgroup of the holomorph $\Aut(G)\ltimes G$ of a group $G$.

\begin{lemma}\label{lem:pres} 
  Let $G$ be an $\Omega$-group via $\theta\colon \Omega\to \Aut(G)$, and write $g^w=g^{w\theta}$ for $g\in G$ and $w\in \Omega$. Let $\langle X\mid R\rangle$ with $\phi\colon F_X\to G$ and $\psi\colon G\to F_X$ be a constructive presentation of $G$. Let $\langle \Omega\mid S\rangle$ be a presentation for $A=\langle \Omega\theta\rangle\leq\Aut(G)$. Then
$\langle \Omega\sqcup {X}\mid  S\ltimes{R}\rangle$
is a presentation for $A\ltimes G$ 
where
\begin{eqnarray*}
  S\ltimes{R}&=& S\sqcup{R}\sqcup \{(x\phi)^{w}\psi\cdot (x^{w})^{-1} : x\in{X},w\in\Omega\}\subset F_{\Omega\sqcup{X}}
\end{eqnarray*}
with embedding 	$\theta\sqcup \phi\colon \Omega\sqcup {X}\to A\ltimes G$, $z\mapsto \left\{\begin{array}{cc}z\theta & (z\in\Omega)\\z\phi & (z\in {X})	\end{array}\right.$.
\end{lemma}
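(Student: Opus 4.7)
The plan is to exhibit the inverse of the natural map $F_{\Omega\sqcup X}/N \to A\ltimes G$ induced by $\theta\sqcup\phi$, where $N$ denotes the normal closure of $S\ltimes R$ in $F_{\Omega\sqcup X}$. This requires three tasks: (i) checking that all elements of $S\ltimes R$ die in $A\ltimes G$ so that the map descends; (ii) surjectivity; (iii) injectivity. The only substantive work is in (iii); the first two are essentially bookkeeping.

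For (i), the relations in $S$ and $R$ evaluate to $1$ by hypothesis. For an action relation $r_{x,w}=(x\phi)^w\psi\cdot (x^w)^{-1}$, note that $(x\phi)^w\in G$ so $\bigl((x\phi)^w\psi\bigr)\phi=(x\phi)^w$ because $\psi$ is a section of $\phi$ (the constructive presentation of $G$ has $N=1$). On the other hand, reading $x^w=w^{-1}xw$ in $F_{\Omega\sqcup X}$ and applying $\theta\sqcup\phi$ gives $(w\theta)^{-1}(x\phi)(w\theta)=(x\phi)^{w\theta}=(x\phi)^w$, where the inner equality uses the defining action of $A$ on $G$ in $A\ltimes G$. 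Hence $r_{x,w}$ maps to $1$. Surjectivity in (ii) is immediate: $\Omega\theta$ generates $A$, $X\phi$ generates $G$, and these together generate the semidirect product.

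The heart of the argument is (iii), which I handle by a rewriting/normal-form argument that mirrors the structure of the semidirect product. The action relation, read in $F_{\Omega\sqcup X}/N$, gives $xw=w\cdot\beta_{x,w}$ with $\beta_{x,w}:=((x\phi)^{w\theta})\psi\in F_X$; conjugating or inverting produces analogous rules for $x^{-1}w$, $xw^{-1}$, and $x^{-1}w^{-1}$. Applying these rules repeatedly inside $F_{\Omega\sqcup X}/N$, I can move every $X$-letter to the right past every $\Omega$-letter, so that every element of $F_{\Omega\sqcup X}/N$ is represented by a word of the form $\alpha\beta$ with $\alpha\in F_\Omega$ and $\beta\in F_X$. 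If such a word maps to $1$ in $A\ltimes G$, then the unique decomposition of elements of $A\ltimes G$ as $a\cdot g$ forces $\alpha\theta=1_A$ and $\beta\phi=1_G$ separately. By the presentation $\langle\Omega\mid S\rangle$ of $A$, the word $\alpha$ lies in the normal closure of $S$ in $F_\Omega$, hence in $N$; by the presentation $\langle X\mid R\rangle$ of $G$, the word $\beta$ lies in the normal closure of $R$ in $F_X$, hence in $N$; so the original word is trivial in $F_{\Omega\sqcup X}/N$. The main obstacle is simply to make the normal-form step precise, most cleanly by induction on the number of $(\Omega,X)$-inversions in a freely reduced word and checking that the rules above strictly decrease this count modulo $N$.
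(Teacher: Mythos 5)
Your parts (i) and (ii) are fine and match the paper's opening computation. The problem is in (iii), in the sentence ``conjugating or inverting produces analogous rules for $x^{-1}w$, $xw^{-1}$, and $x^{-1}w^{-1}$.'' Inverting the relator does give the rule for $x^{-1}w$, but no formal manipulation of the relator $(x\phi)^{w}\psi\cdot(x^{w})^{-1}$ gives a rule for pushing an $X$-letter past $w^{-1}$: the relators only say that conjugation by the image of $w$ maps the image of $F_X$ \emph{into} itself, and say nothing about conjugation by $w^{-1}$. For a presentation of exactly this shape this can genuinely fail: in $\langle a,b\mid a^{-1}ba\,b^{-2}\rangle$ (Baumslag--Solitar) the element $aba^{-1}$ does not lie in $\langle b\rangle$, and indeed not every element has the form $a^{k}b^{m}$, so the normal form $\alpha\beta$ you want is not a consequence of the one-sided conjugation rules alone. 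The missing ingredient is finiteness: since $S\subseteq N$, von Dyck's theorem makes the image of $F_\Omega$ in $F_{\Omega\sqcup X}/N$ a quotient of the finite group $A$, so the image $\bar w$ of $w$ has finite order $d$ and $\bar w^{-1}=\bar w^{\,d-1}$; hence the rule $xw\equiv w\beta_{x,w}$, applied $d-1$ times, already lets you pass $X$-letters across $w^{-1}$. (Equivalently: the image of $F_X$ is a quotient of the finite group $G$, and conjugation by $\bar w$, being injective on it, is onto it.) This is the same delicate point at which the paper's proof asserts that $N=KF_X$ is normal in $F_{\Omega\sqcup X}$ after checking only $N^{w}\leq N$; with the finiteness observation added, your argument is essentially the paper's proof recast in rewriting language, where the paper instead forms $F_{\Omega\sqcup X}/K=(KF_\Omega/K)(KF_X/K)$ and applies von Dyck in both directions rather than exhibiting your explicit $\alpha\beta$ normal form and inverse map.

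A smaller repair: your proposed induction measure does not work as stated. Replacing $xw$ by $w\beta_{x,w}$ can \emph{increase} the number of $(\Omega,X)$-inversions when $\beta_{x,w}$ has length greater than one, because every letter of $\beta_{x,w}$ still precedes all $\Omega$-letters occurring further right. Either induct on something like the number of $\Omega^{\pm1}$-letters to the right of the leftmost $X^{\pm1}$-letter, or avoid word combinatorics altogether: once the image of $F_X$ is known to be closed under conjugation by all generators \emph{and their inverses} (the point fixed above), it is normal in $F_{\Omega\sqcup X}/N$, so the product of the images of $F_\Omega$ and $F_X$ is a subgroup containing all generators, and the decomposition $\alpha\beta$ is immediate. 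Your final step --- reading off $\alpha\theta=1$ and $\beta\phi=1$ from the unique decomposition in $A\ltimes G$ and then using the presentations of $A$ and of $G$ to place $\alpha$ and $\beta$ in $N$ --- is correct.
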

\begin{proof}Without loss of generality, we can assume that $F_X=\langle X\rangle$, $F_\Omega=\langle \Omega\rangle$, and 
 $F_\Omega,F_X\leq F_{\Omega\sqcup{X}}$.  Let $K$ be the normal closure of $S\ltimes {R}$ in 
 $F_{\Omega\sqcup{X}}$. Recall that, by definition, if  $x\in X$, then  $x\phi\psi$ and $x$ define the same element in $G$ via $\phi$. It follows that if $w\in\Omega$ and $x\in X$, then $x^w,(x\phi)^w\psi\in F_{\Omega\sqcup X}$ define the same element in $A\ltimes G$ via $\theta\sqcup \phi$: if  $\alpha\colon F_{\Omega\sqcup X}\to A\ltimes  G$ is the homomorphism defined by $\theta\sqcup\phi$, then
 \[ (x^{w})\alpha
		 = ((w\theta)^{-1},1)(1,x\phi)(w\theta,1) 
		 = (1,(x\phi)^{w})
		 = (1, (x\phi)^{w} \psi \phi)
		 = ((x\phi)^{w} \psi) \alpha\]
                 shows that  $(x\phi)^{w}\psi (x^{w})^{-1}\in \ker \alpha$, so $K\leq \ker \alpha$. Now consider $N=KF_X$. From what is said above, if $w\in\Omega$ and $x\in{X}$, then  $Kx^{w}=K(x\phi)^{w}\psi\leq N$, so $N^w=K^w F_X^w\leq \langle Kx^w : x\in {X}\rangle=N$. This shows that  $N\unlhd F_{\Omega\sqcup{X}}$; note that $K^w=K$ since $K$ is the normal closure in $F_{\Omega\sqcup X}$. Now set  
$C=KF_\Omega$.  It follows that $F_{\Omega\sqcup{X}}
=CN$,  thus $H=F_{\Omega\sqcup{X}}/K=CN/K=(C/K)(N/K)$ 
and $N/K$ is normal in $H$.  
Since $C/K$ and $N/K$ satisfy the presentations for $A$ and $G$ respectively, von Dyck's Theorem \cite{rob}*{(2.2.1)} implies that  $H$ is a quotient of $A\ltimes G$. To show that $H$ is isomorphic to $A\ltimes G$ 
it suffices to notice that $A\ltimes G$ satisfies the relations in 
$S\ltimes{R} $ with respect to
$\Omega\sqcup{X}$ and $\theta\sqcup\phi$. As shown above,  $K\leq \ker \alpha$. Since $H=F_{\Omega\sqcup {X}}/K$ is a quotient of the group
$A\ltimes G=F_{\Omega\sqcup {X}}\alpha$, it follows that $K=\ker \alpha$, and therefore  $\langle \Omega\sqcup {X}\mid  S\ltimes{R} \rangle$ is a presentation for $A\ltimes G$. 
\end{proof}

We now show that {\sf $\Omega$-ComplementAbelian} has a polynomial-time solution for solvable groups.

%XXXXXX
\begin{proposition}\label{prop:complement}
Let $G$ be a solvable $\Omega$-group with abelian normal $\Omega$-subgroup $M\leq G$. There is a polynomial time algorithm that decides whether $G=K\ltimes M$ for some $\Omega$-subgroup $K$, or certifies that no such $K$ exists. 
\end{proposition}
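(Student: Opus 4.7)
My plan is to convert the complement problem into a linear system over the finite abelian group $M$, following the approach of Luks and Wright as formalized via the constructive presentation machinery of Lemma~\ref{lem:pres}.

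First, using the algorithm for constructive polycyclic presentations of solvable permutation groups (Lemma~\ref{lem:permpc}), I would compute a constructive presentation $\langle X\mid R\rangle$ of $\bar G=G/M$ with evaluation $\phi\colon F_X\to \bar G$ and rewriting $\psi\colon \bar G\to F_X$. Since $M$ is $\Omega$-invariant, $\bar G$ inherits an $\Omega$-action, and Lemma~\ref{lem:pres} then yields a presentation $\langle \Omega\sqcup X\mid S\ltimes R\rangle$ for $A\ltimes \bar G$ where $A=\langle\Omega\theta\rangle\leq\Aut(\bar G)$. The key reduction is that $\Omega$-invariant complements to $M$ in $G$ correspond bijectively to complements $\tilde K$ to $M$ in $\tilde G=A\ltimes G$ with $A\leq \tilde K$, via $K\leftrightarrow A\ltimes K$ and $\tilde K\leftrightarrow \tilde K\cap G$.

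To find such a $\tilde K$, lift each $x\in X$ to some $g_x\in G$, retain the canonical embedding of $\Omega$ into $\tilde G$, and let the unknowns $\mu_x\in M$ range. The subgroup $\langle w\theta,\, g_x\mu_x : w\in\Omega,\, x\in X\rangle$ is a complement to $M$ in $\tilde G$ containing $A$ exactly when every relation $r\in S\ltimes R$ evaluates to $1$; a priori each such relation evaluates to some error term $m_r\in M$ that is computable from the lifts $g_x$ when all $\mu_x=1$. Because $M$ is abelian and normal in $\tilde G$, expanding $r(g_{x_1}\mu_{x_1},\ldots)=1$ and collecting the $\mu_x$ via the $\bar G$-action on $M$ converts each relation into a $\mathbb{Z}[\bar G]$-linear equation of the form $\delta_r(\mu)=m_r^{-1}$. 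Altogether this yields a linear system over $M$ whose size is polynomial, since $|X|$, $|R|$, $|\Omega|$, and the lengths of the relations are all polynomial in the input.

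Solving this system completes the algorithm: all primes dividing $|M|$ also divide $|G|$ and are therefore small under our computational model, so $M$ decomposes into its Sylow $p$-components, each a finitely generated abelian $p$-group on which Smith normal form produces a solution, or certifies infeasibility, in polynomial time. A solution $\mu$ yields the complement $K=\langle g_x\mu_x : x\in X\rangle$; infeasibility certifies that no $\Omega$-invariant complement exists. The main obstacle I anticipate is the bookkeeping: translating a nonabelian relation $r(g_{x_1}\mu_{x_1},\ldots)=1$ into a $\mathbb{Z}[\bar G]$-linear equation in the $\mu_x$ requires careful tracking of how each $\mu_x$ is conjugated as it is pushed through the word $r$ by the $g_{x_i}$. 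Once this combinatorial step is correctly implemented, the remainder is routine linear algebra over a finite abelian group whose primary decomposition is accessible in polynomial time.
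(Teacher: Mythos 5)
Your proposal is essentially the paper's own proof: it uses the same constructive presentation of $G/M$, the same $\Omega$-twisted relations supplied by Lemma~\ref{lem:pres}, and the same unknowns ranging over $M$ attached to lifted generators, the only difference being that the paper delegates the resulting system (which is linear over $M$ because $M$ is abelian) to the {\sf Solve} routine of Kantor--Luks--Mark \cite{KLM}, whereas you spell out the $\mathbb{Z}[G/M]$-linear algebra and Smith-form solution directly. One small caution: the relations $S$ of the acting group involve no unknowns and should either be omitted or taken as a presentation of $A=\langle\Omega\theta\rangle\leq\Aut(G)$ (not of its image in $\Aut(G/M)$), so that they hold automatically --- this is why the paper notes that $R'$ never needs to be computed; only $R$ and the twisted relations enter the system.
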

\begin{proof}
Let $G$ be a quotient of a permutation group on $n$ letters, let $\theta\colon \Omega\to \Aut(G)$ be a function, 
and let $M$ be an abelian $(\Omega\cup G)$-subgroup of $G$. We first describe the algorithm, then prove correctness. We use the algorithm of {\cite{PART1}*{Lemma 4.11}} to produce a constructive 
presentation for the solvable quotient $G/M$ with data $\langle X\mid R\rangle$ and maps $\phi\colon X\to G$ and $\psi\colon G\to F_X$. For each    
$s\in \Omega$ and $x\in{X}$, define
\begin{align*}
	w_{s,x} & =((x\phi)^s)\psi\cdot(x^{s})^{-1}\in F_{\Omega\sqcup{X}}.
\end{align*}
Let $\nu\colon X\to M\leq G$ be a function. Considering each $w\in F_{\Omega\sqcup{X}}$ as a word in $\Omega\sqcup{X}$, we denote by $w(\phi\nu)$ the element in $G$ where each symbol $x\in\Omega\sqcup{X}$ in $w$ has been replaced by $(x\phi)(x\nu)$. Use  {\sf  Solve} \cite{KLM}*{Section~3.2} to decide if there is a a function $\nu\colon X\to M$, where
\begin{align}\label{eq:comp-rel-1}
	\forall w\in {R}:&\quad w(\phi\nu) = 1,\textnormal{ and }\\
\label{eq:comp-rel-2}
	\forall s\in\Omega,\forall x\in{X}: & \quad w_{s,x}(\phi\nu)=1.
\end{align}
If no such $\nu$ exists, then report that $M$ has no 
$\Omega$-complement; otherwise, return the group \[K=\langle (x\phi)(x\nu) : x\in{X}\rangle.\]
We show that this  is correct. Let $A=\langle\Omega\theta\rangle\leq \Aut(G)$ and let 
$\langle \Omega\mid {R} '\rangle$ be a presentation of $A$ with
respect to $\theta$. Lemma~\ref{lem:pres} shows that  $\langle\Omega\sqcup {X}\mid {R} '\ltimes{R} \rangle$
is a presentation for $A\ltimes(G/M)$ with respect to $\theta\sqcup \phi$; note that we need not to compute ${R}'$. 

First suppose that the algorithm returns $K=\langle (x\phi) (x\nu):x\in{X}\rangle$.  
As $\{x\phi: x\in {X}\}\subseteq KM$  we get that 
$G=\langle x\phi: x\in {X}\rangle\leq KM\leq G$.  
Since 
$w(\phi\nu)=1$ for all $w\in {R} $ by  \eqref{eq:comp-rel-1}, the group $K$ satisfies the defining
relations of $G/M\cong K/(K\cap M)$, which forces 
$K\cap M=1$, and so $G=K\ltimes M$.  
By \eqref{eq:comp-rel-1} and \eqref{eq:comp-rel-2}, the generator set 
$\Omega\theta\sqcup\{(x\phi) (x\nu):x\in{X} \}$ of 
$\langle A,K\rangle$ satisfies the defining relations ${R} '\ltimes {R} $
of $(A\ltimes G)/M$, and so $\langle A,K\rangle$ is isomorphic to a quotient of 
$(A\ltimes G)/M$ where $K$ is the image of $G/M$.  This shows that $K$ is normal in  
$\langle A,K\rangle$, in particular, $\langle K^{\Omega}\rangle\leq K$.  This proves that if the algorithm 
returns a subgroup, then the output is correct.

Conversely, suppose  $G=K\ltimes M$ such that $K^{\Omega}\subset K$ and there is an idempotent endomorphism $\tau\colon G\to G$ with kernel $M$ and image $K$.
We must show that in this case equations \eqref{eq:comp-rel-1} and \eqref{eq:comp-rel-2} have a solution, so that the algorithm returns a complementary $\Omega$-subgroup to $M$. 
Define the map $\nu\colon X\to M$ by $x\nu=(x\phi)^{-1} (x\phi\tau)$. Now   $K=G\tau=\langle (x\phi) (x\nu): x\in{X} \rangle$ is isomorphic to $G/M$ via  $(x\phi) (x\nu)\mapsto x\phi M$, hence  $\{(x\phi) (x\nu) : x\in {X}\}$ satisfies the relations~${R}$.  Moreover, we have $K^{\Omega}\subseteq K$, so the isomorphism $K\cong G/M$  defined by $(x\phi) (x\nu)\mapsto x\phi M$ extends to $A\ltimes K\to A\ltimes (G/M)$; thus, for all $s\in \Omega$ and $x\in {X}$ we have $w_{s,x}(\phi\nu)=1$. The claim on the complexity follows since we only applied polynomial-time algorithms.
\end{proof}

We will also need to find direct complements; we follow the algorithm in  \cite{Wilson:RemakII}*{Theorem~4.8}. The analysis has not appeared in print so we include its proof.

\begin{proposition}\label{prop:direct}
Let $G$ be an $\Omega$-group and let $U,V\leq G$ be  normal $\Omega$-subgroups with $U\leq V$.  There is a polynomial time algorithm which decides whether $V/U$ is a direct $\Omega$-factor of $G/U$ and if so, returns  
a direct complement. 
\end{proposition}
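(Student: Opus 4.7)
The plan is to reduce the problem to the abelian-complement algorithm of Proposition~\ref{prop:complement}. Set $\bar G = G/U$ and $\bar V = V/U$, both quotients of permutation groups, and compute $\bar Z = C_{\bar G}(\bar V)$, which is a normal $\Omega$-subgroup of $\bar G$. Any direct $\Omega$-complement $\bar W$ to $\bar V$ in $\bar G$ must centralize $\bar V$, hence $\bar W \leq \bar Z$; then $\bar G = \bar V \bar W \subseteq \bar V \bar Z$, so the necessary condition $\bar V \bar Z = \bar G$ can be tested upfront by order comparison. I will argue that, under this assumption, a subgroup $\bar W \leq \bar Z$ is a direct $\Omega$-complement of $\bar V$ in $\bar G$ if and only if $\bar W$ is an $(\Omega \cup \bar G)$-complement of the abelian subgroup $Z(\bar V) = \bar V \cap \bar Z$ in $\bar Z$, with $\bar G$ acting on $\bar Z$ by conjugation.

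For the forward direction, $\bar W \cap Z(\bar V) \leq \bar W \cap \bar V = 1$, and any $z \in \bar Z$ factors as $z = vw$ with $v \in \bar V,\ w \in \bar W \leq \bar Z$, forcing $v = zw^{-1} \in \bar V \cap \bar Z = Z(\bar V)$ and hence $\bar Z = Z(\bar V)\cdot \bar W$; normality of $\bar W$ in $\bar G$ is given. Conversely, such a $\bar W$ is normal in $\bar G$ and lies in $\bar Z$, so it centralizes $\bar V$; one computes $\bar V \bar W = \bar V \cdot Z(\bar V) \cdot \bar W = \bar V \bar Z = \bar G$ and $\bar V \cap \bar W \leq Z(\bar V) \cap \bar W = 1$, giving $\bar G = \bar V \times \bar W$.

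Algorithmically, the procedure would (i) form $\bar G$ and $\bar V$, (ii) compute the centralizer $\bar Z$ and intersection $Z(\bar V) = \bar V \cap \bar Z$ (polynomial-time centralizer and intersection computations in permutation-group quotients, as recalled in Section~\ref{secCF}), (iii) verify $|\bar V|\,|\bar Z| = |\bar G|\,|Z(\bar V)|$ and otherwise report failure, (iv) invoke Proposition~\ref{prop:complement} on the pair $(\bar Z, Z(\bar V))$ with operator set $\Omega \cup \bar G$ (each element of $\bar G$ acting on $\bar Z$ by conjugation), and (v) pull back the complement $\bar W$ to its preimage $W$ in $G$, or return failure if no complement exists.

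The main obstacle is that Proposition~\ref{prop:complement} is stated for a solvable ambient group, while $\bar Z$ need not be solvable. However, inspection of its proof shows that solvability is invoked only to produce a polynomial-sized constructive presentation of $\bar Z/Z(\bar V)$, through the citation of \cite{PART1}*{Lemma 4.11}; for arbitrary quotients of permutation groups, a polynomial-sized constructive presentation is available via the standard methods recalled in Section~\ref{secCF}. The remainder of the proof—setting up the equations \eqref{eq:comp-rel-1}--\eqref{eq:comp-rel-2} over the abelian target $Z(\bar V)$ and solving them via {\sf Solve}—goes through verbatim. All the remaining set-theoretic operations are polynomial-time in the permutation-group model, yielding the claimed complexity.
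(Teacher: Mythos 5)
Your proposal matches the paper's proof essentially step for step: compute $C_{G/U}(V/U)$, test that together with $V/U$ it generates $G/U$, then run {\sf $\Omega$-ComplementAbelian} with $G$ adjoined to the operator set to complement $Z(V/U)=V/U\cap C_{G/U}(V/U)$ inside the centralizer, and pull back, with the same two-directional correctness argument. The only real difference is your explicit discussion of the solvability hypothesis in Proposition~\ref{prop:complement}, which the paper invokes without comment (and which is harmless in its intended application inside solvable groups); your observation that only the constructive presentation step depends on it is a reasonable patch.
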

\begin{proof}
First compute $C/U=C_{G/U}(V/U)$ via \cite{KL:quo}*{P6},  and test whether $G=\langle C,V\rangle$, for example,  
by computing group orders. If $G\ne \langle C,V\rangle$, then report that $V/U$ is not a direct 
$\Omega$-factor of $G/U$.  Otherwise, compute the center $Z(V/U)$ via \cite{KL:quo}*{P6} and use {\sf $\Omega$-ComplementAbelian}
to compute a $G$-complement $K/U$ to $Z(V/U)$ in $C/U$, or, if none exists, report that $V/U$ is 
not a direct $\Omega$-factor of $G/U$. We prove this this is correct. If $G/U=K/U\times V/U$ is a direct product of $\Omega$-subgroups with $U\leq K\leq G$, then 
$K/U\leq C_{G/U}(V/U)=C/U$ and $K/U$ complements $V/U\cap C/U=Z(V/U)$; the algorithm constructs 
such an $\Omega$-complement.  Conversely, if we find a $\Omega$-complement $K/U$ to $Z(V/U)$ 
in $C/U$, then we have $(K/U)\cap (V/U)=U/U$, and $K/U$ and $V/U$ centralize each other; therefore so long 
as $G/U=\langle K/U,V/U\rangle$, the $\Omega$-subgroup $K/U$ is a direct complement to $V/U$ in $G/U$. We only applied polynomial-time algorithms.
\end{proof}

%%%%%%%%%%%%%%%%%
\subsection{Sylow towers and socles}

Following \cite{rob}*{Section~9.1}, a set of Sylow subgroups, one for each prime dividing the group order, is a \emph{Sylow basis} if any two such  subgroups $U$ and $V$ are permutable, that is, if $UV=VU$;  every solvable group admits a Sylow basis. A group $L$ has an \emph{abelian Sylow tower} if there exists a Sylow basis $\{Y_1,\ldots,Y_\ell\}$ of  abelian groups such that $L=Y_{1}\ltimes\cdots \ltimes Y_{\ell}$.%; in particular, $Y_\ell$ is normal in $L$.

\begin{proposition}\label{prop:Sylow-tower}
Let $L$ be a solvable group which has an abelian Sylow tower. There is a polynomial-time algorithm that 
computes a Sylow tower $L=Y_{1}\ltimes\cdots \ltimes Y_{\ell}$.
\end{proposition}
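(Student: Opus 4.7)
The plan is to identify the innermost factor $Y_\ell$ of the tower, peel it off as a normal complemented subgroup, and recurse on the complement. The key structural input is the observation that if $L=Y_1\ltimes\cdots\ltimes Y_\ell$ is any abelian Sylow tower then $Y_\ell$ is subnormal in $L$, and a subnormal Sylow subgroup is automatically normal (iterating the fact that a normal Sylow $p$-subgroup of a group is characteristic in it, hence remains normal after passing to any overgroup in which that group is normal). Consequently $L$ must possess at least one normal Sylow subgroup, and every Sylow subgroup of $L$ (being conjugate to some $Y_i$) is abelian.

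First, I would factor $|L|$ into primes; this is possible in polynomial time because every prime dividing the order of a permutation group on $d$ points is at most $d$. For each prime $p$ dividing $|L|$, I would compute a Sylow $p$-subgroup $P_p$ via the standard polynomial-time algorithms of \cite{Seress}, and test normality by checking whether $P_p^{\,g}\subseteq P_p$ for each generator $g$ of $L$. By the remark above, at least one such $p$ passes the test and the corresponding $P_p$ is automatically abelian (this can be verified cheaply by a commutator check on generators). Choose any such $p$ and set $Y_\ell:=P_p$.

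Next, I would invoke Proposition~\ref{prop:complement} with empty $\Omega$ to compute a complement $K$ to $Y_\ell$ in $L$; such a complement exists by Schur--Zassenhaus since $\gcd(|Y_\ell|,|L/Y_\ell|)=1$, so the algorithm returns one in polynomial time. The complement $K$ is isomorphic to $L/Y_\ell$, and modding the tower $L=Y_1\ltimes\cdots\ltimes Y_\ell$ out by the normal subgroup $Y_\ell$ yields an abelian Sylow tower on $L/Y_\ell$ whose factors are the images of the remaining $Y_i$ (which project isomorphically via coprimality of orders). Thus $K$ satisfies the hypothesis of the proposition, and a recursive call produces a tower $K=Y_1\ltimes\cdots\ltimes Y_{\ell-1}$, which I extend by appending $Y_\ell$ to obtain the desired decomposition of $L$.

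The recursion depth equals the number of distinct primes dividing $|L|$, which is $O(\log|L|)$, and each layer invokes only polynomial-time subroutines (small-integer factorization, Sylow computation, normality checking, and Proposition~\ref{prop:complement}); so the total running time is polynomial in the input size. The main obstacle, and the reason Proposition~\ref{prop:complement} is critical here, is turning the pure existence statement of Schur--Zassenhaus into a constructive polynomial-time routine for permutation groups: without an effective complement procedure we would have no way to pass from $L$ to a concrete permutation group $K$ on which to recurse.
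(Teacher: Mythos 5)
Your proposal is correct and follows essentially the same route as the paper: find a normal (necessarily abelian) Sylow subgroup by factoring $|L|$, computing Sylow subgroups and testing normality, peel it off using the abelian-complement algorithm of Proposition~\ref{prop:complement}, and recurse on the complement, with polynomially many polynomial-time calls. Your added justifications (subnormality of the innermost tower factor forcing normality, and Schur--Zassenhaus for existence of the complement) are fine but not needed beyond what the paper asserts directly from the tower decomposition.
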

\begin{proof}
Compute and factorize $|L|=p_1^{e_1}\cdots p_{\ell}^{e_{\ell}}$. By assumption, $L$ has a normal Sylow subgroup; we run over the prime factors $p_i$ and
compute a Sylow $p_i$-subgroup $P_i$ until  $[P_i,L]$ is contained in $P_i$; if so, set $Y_{\ell}=P_i$. Since all Sylow subgroups are abelian, we use {\sf $\Omega$-ComplementAbelian}  to compute a complement $K\leq L$ to $Y_{\ell}$. By construction, $L=K\ltimes Y_\ell$, and $|K|$ and $|Y_\ell|$ are coprime. Since $K\cong L/Y_\ell$ has an abelian Sylow tower, we can recurse with $K$ and compute a Sylow basis for $K$. We only apply polynomial-time algorithms at most $\sum_{i=1}^\ell i\in O((\log |G|)^2)$ times. 
\end{proof}

We also  need the ability to compute the socle of a solvable group.  Algorithms
for that have been given for permutation groups by Luks \citelist{\cite{luks}\cite{KL:quo}*{P15}} 
and for black-box solvable groups by H\"ofling~\cite{socle}.  H\"ofling's
algorithm reuses the ingredients given above for computing complements, which we will 
later use to construct Frattini subgroups.  So we pause to note the complexity of H\"ofling's
algorithm.

\begin{proposition}\label{prop:socle}
Generators for the socle of a solvable group can be computed in polynomial-time.
\end{proposition}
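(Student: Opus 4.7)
The plan is to run H\"ofling's algorithm \cite{socle} and verify that each step costs polynomial time in our computational model. H\"ofling designed his algorithm for black-box solvable groups, with the main subroutine being the construction of complements to abelian normal subgroups; Proposition~\ref{prop:complement} supplies precisely such a polynomial-time subroutine for (quotients of) permutation groups, so the task reduces to checking that the remaining ingredients are also polynomial-time in our model.

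First, I would factorize $|G|=p_1^{e_1}\cdots p_k^{e_k}$, which is cheap because every prime $p_i$ is bounded by the permutation degree, and use the decomposition $\Soc(G)=\bigoplus_p \Soc_p(G)$ (the sum over primes of the minimal normal $p$-subgroups of $G$) to reduce to one prime at a time. For a fixed $p$, one has $\Soc_p(G)\leq\Omega_1(Z(O_p(G)))=:V$, and $\Soc_p(G)$ is precisely the socle of $V$ viewed as an $\mathbb{F}_p[G]$-module; both $O_p(G)$ and $V$ are computable in polynomial time for permutation quotient groups by standard machinery \cite{Seress,KL:quo}. To extract the $\mathbb{F}_p[G]$-socle of $V$ I would iteratively find a minimal $G$-invariant subspace $M$ of the current module and apply {\sf $\Omega$-ComplementAbelian} with $\Omega=G$ to test whether $M$ splits off in $V$; if yes, peel off a direct summand via Proposition~\ref{prop:direct} and recurse on the complement, and if no, replace $V$ by $V/M$ and continue with the residual action.

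The main obstacle is controlling the recursion and the associated bookkeeping. Each iteration strictly decreases $\dim_{\mathbb{F}_p} V$, so at most $\log_2 |G|$ iterations are needed per prime and $O(\log^2 |G|)$ in total; however, each call must be set up with fresh data describing the $G$-action on the current quotient of $V$ so that Propositions~\ref{prop:complement} and~\ref{prop:direct} apply uniformly. Since every such call, together with the computations of $O_p(G)$, centers, and $\Omega_1$-subgroups, is polynomial-time in our input model, the whole procedure returns generators of $\Soc(G)$ in polynomial time, which is the desired conclusion.
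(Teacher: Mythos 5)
Your reduction to a module computation is fine: every minimal normal $p$-subgroup of a solvable $G$ lies in $V=\Omega_1(Z(O_p(G)))$, and $\Soc_p(G)$ is exactly the $\mathbb{F}_p[G]$-socle of $V$. The gap is in the subroutine you propose for extracting that socle, and it is a genuine one: the ``peel off if it splits, otherwise pass to $V/M$'' recursion is not correct as stated, and no correctness argument is offered. A minimal submodule $M$ lies in $\Soc(V)$ whether or not it has a complement in $V$, so the ``no'' branch cannot mean that $M$ is discarded; and if instead you keep $M$ and recurse on $V/M$, the minimal submodules you subsequently find in the quotient need not lift to simple submodules of $V$. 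Concretely, if $V$ is uniserial of length $2$ with unique minimal submodule $M$ (which happens, since $V$ is a module for $G/O_p(G)$, whose order may still be divisible by $p$), then $\Soc(V)=M$, yet your procedure finds that $M$ does not split, passes to $V/M$, where the image of $V$ is simple and trivially splits off --- so, depending on how the output is assembled, you either omit $M$ or wrongly include (a preimage of) $V/M$. In short, $\Soc(V)/M\subseteq\Soc(V/M)$ can be strict, so the socle cannot be read off from quotients in this way; the correctness of the complement-based recursion is precisely the nontrivial content that must be supplied, not assumed. You also say you are ``running H\"ofling's algorithm,'' but the procedure you describe is not his, so you cannot borrow his correctness proof wholesale.

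For comparison, the paper stays with H\"ofling's actual scheme and its proven correctness statement: compute a chief series $1=N_0\lhd N_1\lhd\cdots\lhd N_r=L$ via \cite{KL:quo} (P11), set $S_1=N_1$, and for each $i>1$ use Proposition~\ref{prop:direct} to find a direct $L$-complement $S_i$ to $N_{i-1}$ in $N_i$ (taking $S_i=1$ if none exists); then $\Soc(L)=S_1\times\cdots\times S_r$ by \cite{socle}*{Proposition~5}. Note the complement test there is ``complement to $N_{i-1}$ in $N_i$ inside $L$,'' applied layer by layer up a chief series of the whole group --- not ``complement to a minimal submodule inside $V$'' --- and the bottom layer is always kept. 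If you want to salvage your prime-by-prime version, you would need to replace your extraction step by an argument of this type (or another certified socle computation for $\mathbb{F}_p[G]$-modules); as written, the key correctness claim is missing.
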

\begin{proof}  
Let $L$ be a solvable group, treated as an $L$-group under conjugation action. Use \cite{KL:quo}*{P11} to compute a chief series $1=N_0\lhd N_1\lhd \ldots\lhd N_r=L$; in particular, $N_1$ is a minimal normal subgroup of $L$. 
%We construct this series by first computing the derived series (\cite{KL:quo}*{P}); we can check that we have obtained a chief series, which makes this algorithm Las Vegas. Each section of this series is a finite abelian group, and we proceed by refining this series using normal closure algorithms. Once we have constructed this series, 
We set $S_1=N_1$, and for each $i>1$ compute a direct $L$-complement $S_i$ to $N_{i-1}$ in $N_i$ (so $S_i\unlhd L$); set $S_i=1$ if this does not exist.    To this end, we proceed as follows: we use the algorithm of Proposition \ref{prop:direct} to find an $L$-subgroup $T\leq N_{i}$ such that $N_{i}=T\times N_{i-1}$; if no such $T$ exists, then we set $S_i=1$. As $T$ is normal in $L$, set $S_i=T$. Once this is done for $i=1,\ldots,r$, return $S_1\times\cdots\times S_r$. The correctness of this algorithm follows from  \cite{socle}*{Proposition~5} where it is shown that  ${\rm soc}(L)=S_1\times\cdots\times S_r$. We only apply algorithms assumed or shown to be polynomial-time.
\end{proof}

%%%%%%%%
\subsection{Computing polycyclic constructive presentations} 
Constructions of polycyclic presentations from solvable permutation groups are done
by various means, sometimes invoking steps (such as \emph{collection}) whose complexities 
are difficult to analyze; see for instance \cite{Seress}*{p.\ 166}. In that approach, 
one first chooses a polycyclic generating sequence $x_1,\dots,x_s$ and then uses the 
constructive membership testing mechanics of permutation groups to sift the relations 
$x_i^{p_i}$ and $x_i^{x_j}$ into words in the $x_k$.  That process leaves the resulting
words in arbitrary order, rather than in collected order, that is, we need 
$x_i^{p_i}=x_{i+1}^{e_{i+1}}\cdots x_s^{e_s}$, but all we can know is that $x_i^{p_i}$
is a word in $x_{i+1},\ldots,x_s$ in no particular order.  Hence, in that
approach, a final step of rewriting must be applied to get the words in normalised 
(\emph{collected}) form; this comes at a cost, see the discussion in \cite{collection}. 
We present an alternative.

\begin{lemma}\label{lem:permpc}
A polycyclic constructive presentation for a solvable group can be computed in polynomial-time.
\end{lemma}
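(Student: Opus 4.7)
The plan is to construct the polycyclic constructive presentation inductively along a composition series, arranging matters so that each relation produced already appears in collected form and no separate rewriting phase is required. First, compute in polynomial time a subnormal series $G = G_0 \trianglerighteq G_1 \trianglerighteq \cdots \trianglerighteq G_s = 1$ with each factor $G_{i-1}/G_i$ cyclic of prime order $p_i$; this is standard for solvable permutation groups via Sims' methods (see \cite{Seress}). Choose $g_i \in G_{i-1}\setminus G_i$, let $X = \{x_1,\ldots,x_s\}$, and define $\phi\colon F_X \to G$ by $x_i \mapsto g_i$. The length $s$ is at most $\log_2 |G|$ and hence polynomial in the input size.

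I then proceed by downward induction on $i$, maintaining a polycyclic constructive presentation $(X_i,R_i,\phi_i,\psi_i)$ of $G_i$ with $X_i = \{x_{i+1},\ldots,x_s\}$ and with every value of $\psi_i$ a collected word $x_{i+1}^{e_{i+1}}\cdots x_s^{e_s}$. The base case $i=s$ is trivial. To pass from $G_i$ to $G_{i-1}$, for any $g\in G_{i-1}$ let $e\in\{0,\ldots,p_i-1\}$ be the unique exponent with $g_i^{-e} g\in G_i$, determined by one constructive membership test; set
\[
\psi_{i-1}(g) = x_i^{e}\cdot \psi_i(g_i^{-e} g).
\]
The additional polycyclic relations are produced by feeding two specific families of elements of $G_i$ through $\psi_i$:
\[
R_{i-1} = R_i \;\cup\; \bigl\{x_i^{p_i}\cdot \psi_i(g_i^{p_i})^{-1}\bigr\} \;\cup\; \bigl\{x_j^{x_i}\cdot \psi_i(g_j^{g_i})^{-1} : i<j\leq s\bigr\}.
\]
These are well defined since $g_i^{p_i}\in G_i$ and $g_j^{g_i}\in G_i$ (the latter because $G_i\trianglelefteq G_{i-1}$ and $g_j\in G_i$ for $j>i$); by the inductive hypothesis each factor $\psi_i(\cdot)$ is already a collected word in $x_{i+1},\ldots,x_s$, so the new power and conjugation relations are produced directly in polycyclic normal form. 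A straightforward verification shows $\psi_{i-1}(g)\phi_{i-1}=g$, so the data $(X_{i-1},R_{i-1},\phi_{i-1},\psi_{i-1})$ satisfy Definition~\ref{def:pres} with $N=1$.

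Correctness follows from the standard counting argument: the relations in $R_{i-1}$ force every element of $\langle X_{i-1}\mid R_{i-1}\rangle$ into the normal form $x_i^{e_i}\cdots x_s^{e_s}$ with $0\leq e_j<p_j$, giving at most $p_i\cdot |G_i| = |G_{i-1}|$ elements, while $\phi_{i-1}$ induces a surjection onto $G_{i-1}$, so the presentation is faithful. For the complexity there are $s$ inductive steps, each introducing $O(s)$ relations, and each relation is computed from one constructive membership test and one recursive evaluation of $\psi_i$; all of this is polynomial-time. The step I expect to require most care --- producing the relations directly in collected form, thereby avoiding the costly rewriting phase criticized just before the lemma --- is precisely what the inductive invariant on $\psi_i$ accomplishes by construction.
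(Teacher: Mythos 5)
Your proof is correct, but it follows a different route from the paper's. You work bottom-up along a composition series $G_0\trianglerighteq\cdots\trianglerighteq G_s=1$ with cyclic prime factors, maintaining a constructive presentation of each subgroup $G_i$, defining the sift map $\psi_{i-1}$ explicitly via coset exponents, and proving correctness by von Dyck plus the counting argument $|P|\leq p_i|G_i|=|G_{i-1}|$; the collected form of the relations is guaranteed because $\psi_i$ always returns normal-form words. The paper instead works top-down through the quotients $L/L_i$ of a \emph{chief} series (computed via \cite{KL:quo}), with an inner recursion refining each elementary abelian layer $C_{p_i}^{f_i}$ into a composition series, and it splices the presentations together using Luks' constructive-presentation extension lemma \cite{Luks:mat}*{Lemma~4.3} rather than a counting argument; collectedness there comes from the fact that the extension step only appends normalised tails to existing relations. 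Your version is more self-contained (no appeal to Luks' extension lemma), while the paper's version leans on existing machinery and, by phrasing everything through quotients and the Kantor--Luks toolkit, fits the paper's convention that groups may be given as \emph{quotients} of permutation groups -- if you want your argument to cover that case you should draw the composition series and membership tests from \cite{KL:quo} (or \cite{luks}) rather than plain Sims methods, which is a cosmetic change. Two small imprecisions to fix: determining the exponent $e$ with $g_i^{-e}g\in G_i$ is not literally one constructive membership test but up to $p_i$ ordinary membership tests (or a brute-force discrete logarithm in $C_{p_i}$), which is still polynomial since $p_i$ is bounded by the permutation degree -- exactly the observation the paper makes for its maps $\psi_j$; and in the counting step you should note that the subgroup generated by the images of $x_{i+1},\ldots,x_s$ is finite (being a quotient of $G_i$ by von Dyck), so the relations $x_j^{x_i}\in H$ already force $H^{x_i}=H$ and hence the normal form decomposition you use.
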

\begin{proof}
Let $L$ be a solvable group. Use \cite{KL:quo}*{P11} to construct a chief series $L=L_0>\ldots >L_s=1$. Since $L$ is solvable, each section 
$L_i/L_{i+1}$ is isomorphic to $C_{p_i}^{f_i}$ for some prime $p_i$ and $f_i\geq 1$. In the following,
set $d(i)=f_0+\ldots+f_{i-1}$ for $i>0$, and denote by $F_{m}$ with $m\in\mathbb{N}$
the free group on $x_1,\ldots,x_{m}$.  We work with a double recursion through $L/L_i$ and within each factor $L_i/L_{i+1}$.

For the inner recursion  we assume $L_i/L_{i+1}\cong C_{p_i}^{f_i}$ 
and want to create a constructive presentation for this group.  Note that 
every chief series of $L_i/L_{i+1}$ is a composition series, so we use  \cite{KL:quo}*{P11} to find generators $g_1,\ldots,g_{f_i}$ of a composition series 
$L_{i0}>L_{i1}>\cdots>L_{if_i}=L_{i+1}$ such that each $L_{ij}=\langle g_{j+1},L_{j+1}\rangle$ and 
$\langle x_j \mid x_j^{p_i}\rangle$ is a presentation for $L_{ij}/L_{i(j+1)}\cong C_{p_i}$.
To make this constructive, use $\psi_j\colon L_{ij}\to F_1$, defined by sending $gL_{i(j+1)}\in L_{ij}/L_{i(j+1)}$ to $x_1^e$ where $g^{-1}g_{j+1}^e\in L_{i(j+1)}$.  Since $e\leq p_1$ is less than the size of the input, $\psi_j$ can be evaluated in polynomial time. This yields a  constructive polycyclic presentation of $L_{ij}/L_{i(j+1)}$. Now suppose by induction we have a constructive polycyclic presentation 
$F_{j}\to L_{i0}/L_{ij}$.  Since we also have a constructive polycyclic presentation of
$F_1\to L_{ij}/L_{i(j+1)}$,  we obtain a constructive presentation $F_{j+1}\to L_{i0}/L_{i(j+1)}$ by Luks' constructive presentation extension lemma \cite{Luks:mat}*{Lemma~4.3}. In that new presentation, every polycyclic relation (for example
$x_k^{p_i}=x_{k+1}^*\cdots x_{j}^*$ or $x_k^{x_{\ell}} = x_{\ell+1}^*\cdots x_j^*$) is 
appended with an element of $\langle x_{j+1}\rangle$, and so the resulting relations are in collected form. Thus, at the end of this inner recursion we have
a polycyclic constructive presentation for the elementary abelian quotients $L_i/L_{i+1}$.

Now consider  the outer recursion. In the base case $i=0$ we apply the above method to
create a constructive polycyclic presentation of $L_0/L_1$.  Now suppose by 
induction we have a polycyclic constructive presentation of $L/L_i$
with maps  $\varphi\colon F_{d(i)}\to L/L_i$ and $\psi\colon L/L_i\to F_{d(i)}$ which can be applied
in polynomial time. As in the base case, we construct a polycyclic constructive 
presentation with maps $\varphi'\colon F_{f_i}\to L_i/L_{i+1}$ and 
$\psi'\colon L_i/L_{i+1}\to  F_{f_i}$.  
Luks' extension lemma now makes a constructive presentation for $L/L_{i+1}$ with maps $\varphi^*\colon F_{d(i+1)}\to L/L_{i+1}$ and  
$\psi\colon L/L_{i+1}\to F_{d(i+1)}$. In this process, relations of $L/L_i$ of the form $x_j^{p}=x_1^*\cdots x_{d(i)}^*$ and
$x_j^{x_k} = x_{k+1}^*\cdots x_{d(i)}^*$ are appended with normalised words in $L_i/L_{i+1}$, 
so these continue to be in collected form. We also add the polycyclic relations 
for $L_i/L_{i+1}$, so the extended constructive presentation is  polycyclic.
\end{proof}  

%%%%%%%%%%%%%%%%%%%%%%%%%%%%%%%%%%%%%%%%%%%%%%%%%%%%%%%%%%%%%%%%%%%%

 %%%%%%%%%%%%%%%%%%%%%%%%%%%%%%%%%%%%%%%%%%%%%%%%%%%%%%%%%%%%%%%%%%%%
%%%%%%%%%%%%%%%%%%%%%%%%%%%%%%%%%%%%%%%%%%%%%%%%%%%%%%%%%%%%%%%%%%%%
%%%%%%%%%%%%%%%%%%%%%%%%%%%%%%%%%%%%%%%%%%%%%%%%%%%%%%%%%%%%%%%%%%%%
%%%%%%%%%%%%%%%%%%%%%%%%%%%%%%%%%%%%%%%%%%%%%%%%%%%%%%%%%%%%%%%%%%%%
%%%%%%%%%%%%%%%%%%%%%%%%%%%%%%%%%%%%%%%%%%%%%%%%%%%%%%%%%%%%%%%%%%%%

%%%%%%%%%%%%%%%%%%%%%%%%%%%%%%%%%%%%%%%%
\section{Isomorphism testing of cube-free groups: solvable Frattini-free groups}\label{secsolvFF}\label{sec:solvFF}
We now deal with Step (2) of our algorithm as described in Section  \ref{sec:summary}. 
Using the notation of  Section~\ref{secStructure}, throughout the following $L$ and $\tilde L$ 
are  finite solvable groups of cube-free order, and we consider their Frattini-free quotients 
$L_{\Phi}=L/\Phi(L)$ and $\tilde L_{\Phi}=\tilde L/\Phi(\tilde L)$. Recall that 
$L_{\Phi}=K\ltimes {\rm soc}(L_\Phi)$ with ${\rm soc}(L_\Phi)=B\times C$ where $|B|=b$
and $|C|=c^2$ with $b$ and $c$ square-free; analogously for $\tilde L_\Phi$. 
In the remainder of this section we describe how to construct an isomorphism 
$L_\Phi\to \tilde L_\Phi$; our construction fails if and only if the two groups are not isomorphic. 
  
\begin{proposition}\label{prop:frat-free-decomp}
There is a polynomial-time algorithm given a solvable Frattini-free group $L_\Phi$ 
of cube-free order, returns generators for the decomposition into subgroups
$(K,B,C)$ described above, along with isomorphisms $B\to \prod_{i=1}^s \mathbb{Z}/p_i$ 
and $C\to \prod_{j=s+1}^m (\mathbb{Z}/p_j)^2$, and
a representation \[K\to \Aut(B\times C)\to \prod\nolimits_{i=1}^s \GL_1(p_i)\times \prod\nolimits_{j=s+1}^m\GL_2(p_j)\] 
induced by conjugation of $K$ on $B\times C$.
\end{proposition}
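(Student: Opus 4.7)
The plan is to decompose $L_\Phi$ in stages using the algorithms developed in Section~\ref{sec:prelres}. First, I would compute the socle $S=\Soc(L_\Phi)$ via Proposition~\ref{prop:socle}. Since $L_\Phi$ is solvable and Frattini-free, $S$ is abelian and contains its own centralizer; moreover, $L_\Phi$ has cube-free order, so each minimal normal subgroup is elementary abelian of rank $1$ or $2$. Hence $S$ is a direct product of its elementary abelian Sylow $p$-subgroups $S_p$, each of order $p$ or $p^2$. Factoring the known order $|S|$ and computing each Sylow $p$-subgroup in polynomial time, I would partition the primes $p_1,\ldots,p_m$ so that the ones with $|S_p|=p$ are listed first ($p_1,\ldots,p_s$) and contribute to $B$, while those with $|S_p|=p^2$ are listed last ($p_{s+1},\ldots,p_m$) and contribute to $C$. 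Picking an ordered basis of each $S_{p}$ (one or two generators) yields explicit isomorphisms $B\to\prod_{i=1}^s\mathbb{Z}/p_i$ and $C\to\prod_{j=s+1}^m(\mathbb{Z}/p_j)^2$.

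Next, because $L_\Phi$ is Frattini-free solvable with abelian socle, Gasch\"utz's theorem (as invoked in Section~\ref{secStructure}) guarantees that $S$ is complemented in $L_\Phi$. Regarding $L_\Phi$ as an $L_\Phi$-group under conjugation, I would apply {\sf $\Omega$-ComplementAbelian} (Proposition~\ref{prop:complement}) to the abelian normal $\Omega$-subgroup $S$; it returns in polynomial time a complement $K\leq L_\Phi$ with $L_\Phi=K\ltimes S=K\ltimes(B\times C)$.

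Finally, to produce the representation $K\to\Aut(B\times C)=\prod_{i=1}^s\GL_1(p_i)\times\prod_{j=s+1}^m\GL_2(p_j)$, I would take generators of $K$ and, for each generator $k$ and each Sylow component $S_p$, compute the conjugates $g^k$ of the fixed basis vectors $g$ of $S_p$. Using constructive membership in the abelian $p$-group $S_p$ (trivially polynomial-time since its order divides the small prime squared), I can express each $g^k$ as a coordinate vector over $\mathbb{F}_p$, yielding the corresponding matrix in $\GL_1(p)$ or $\GL_2(p)$; concatenating these across all Sylow components gives the required homomorphism.

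I do not anticipate a serious obstacle: every step uses a tool already shown to run in polynomial time in the permutation-group model (socle, Sylow subgroups, complements, discrete logarithms in groups of order at most $p^2$ with $p$ bounded by the input size). The only subtle point is ensuring that the complement computation always succeeds on valid input, which is precisely the content of the Frattini-free hypothesis combined with Gasch\"utz's theorem; if the input is not genuinely Frattini-free of cube-free order, {\sf $\Omega$-ComplementAbelian} will correctly certify failure, so the same procedure can also be used to detect invalid input.
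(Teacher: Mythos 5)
Your proposal is correct and follows essentially the same route as the paper: socle via Proposition~\ref{prop:socle}, complement via {\sf $\Omega$-ComplementAbelian} (Proposition~\ref{prop:complement}), primary decomposition of the socle into the rank-one and rank-two Sylow components giving $B$ and $C$, explicit isomorphisms to $\prod_i\mathbb{Z}/p_i$ and $\prod_j(\mathbb{Z}/p_j)^2$, and the representation obtained from the conjugation action of generators of $K$ on chosen bases. The only cosmetic difference is that the paper invokes its earlier abelian-basis machinery (via Karagiorgos--Poulakis and Proposition~\ref{prop:Sylow-tower}) where you pick bases and do small discrete logarithms by hand, which is equally valid since all primes are bounded by the permutation degree.
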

\begin{proof} Use the algorithms of Propositions \ref{prop:socle} \& \ref{prop:complement} 
to compute generators for ${\rm soc}(L_\Phi)$ and for a complement $K$ to ${\rm soc}(L_\Phi)$ 
in $L_\Phi$.  Then use the algorithm of Proposition~\ref{prop:Sylow-tower} to decompose 
${\rm soc}(L)$ as a direct product of its Sylow subgroups. Using the decomposition 
series of each Sylow subgroup, we obtain the decomposition ${\rm soc}(L_\Phi)=B\times C$ along with 
primary decompositions of $B=\prod_{i=1}^s Y_i$ and $C=\prod_{j=s+1}^m Y_j$.  We can further produce isomorphisms  
$\beta_i:Y_i\to \mathbb{Z}/p_i$ and $\kappa_j:Y_j\to (\mathbb{Z}/p_j)^2$, for example, by using our results from {\cite{PART1}*{Section 3}}, based on Karagiorgos \& Poulakis \cite{KP}.  Given 
standard representations for $\Aut(\mathbb{Z}/p_i)\cong (\mathbb{Z}/p_i)^{\times}$ and
$\Aut((\mathbb{Z}/p_j)^2)=\mathrm{GL}_2(p_j)$, compose with $\beta_i$ and $\kappa_j$
respectively to produce an isomorphism
\begin{align*}
	\tau\colon\mathrm{Aut}(B\times C)\to \prod\nolimits_{i=1}^s \GL_1(p_i)\times \prod\nolimits_{j=s+1}^m \mathrm{GL}_2(p_j).
\end{align*}
Finally, define $\pi:K\to\Aut(B\times C)$ by $(bc)(k)\pi= b^kc^k$, so $\pi\tau$ is the required map from $K$. 

The correctness of this algorithm is apparent. 
The claim on the timing of the first portion follows since we only invoked $O(\log |L_\Phi|)$ many 
polynomial-time algorithms.  We can apply the algorithms of {\cite{PART1}*{Section 3}} 
to construct an isomorphism in polynomial time since  $|Y_i|=p_i$ and $|Y_j|=p_j^2$, and
both $p_j$ and $p_j$ are bounded by the size of the permutation domain $\Omega$ of $L$.
So the complexity of the results used from {\cite{PART1}} is sufficient.  Our  assumption 
is that all groups here are permutation groups: in the case of the groups $\prod_{j=s+1}^m \GL_2(p_j)$, we can treat  the matrices as permutations of pairs $\bigcup_{j=s+1}^m \{(a,b) | a,b\in \mathbb{Z}/p_j\}$;  
this domain has size $O(p_{s+1}+\cdots+p_m)\subset O(|\Omega|\log |L|)$, so is polynomial
in the input size.
\end{proof}

To simplify the exposition, we make the following convention and identify 
\begin{eqnarray*}
	B=\tilde B=\prod\nolimits_{i=1}^s \mathbb{Z}/{p_i}
		\quad\text{and}\quad
		 C=\tilde C=\prod\nolimits_{i=s+1}^m (\mathbb{Z}/{p_i})^2.
\end{eqnarray*}
Recall from Section~\ref{secStructure} that  the conjugation action of $K$ on $B\times C$ is 
faithful.  Hence, we also treat $K$ and $\tilde K$ as subgroups of
\[\Aut(B\times C)=\prod\nolimits_{i=1}^s {\rm GL}_1(p_i)\times \prod\nolimits_{i=s+1}^m {\rm GL}_2(p_i).\]
For $j=1,\ldots,m$ denote by $K_i$ and $\tilde K_i$ the projections of $K$ and 
$\tilde K$, respectively, into the $j$-th factor of $\Aut(B\times C)$; thus $K_j$ 
and $\tilde K_j$ describe the conjugation action of $K$ and $\tilde K$, respectively, 
on the Sylow $p_j$-subgroup $Y_{j}\leq B\times C$. 

Gasch\"utz has shown that  
$L_\Phi\cong \tilde L_\Phi$ if and only if $K$ and $\tilde K$ are conjugate in 
$\Aut(B\times C)$, see \cite{cf}*{Lemma~9}; hence, the isomorphism problem reduces 
to finding an element $\alpha\in\Aut(B\times C)$ with $\alpha^{-1} K\alpha=\tilde K$. 
Once such an $\alpha$ is found, the isomorphism $\psi_\Phi$ can be defined 
as follows:  writing the elements of $L_\Phi=K\ltimes (B\times C)$ and 
$\tilde L_\Phi=\tilde K\ltimes (B\times C)$ as $(k,b,c)$ and $(\tilde k,b,c)$, respectively, we set
\begin{align}
	\psi_\Phi\colon L_\Phi\to \tilde L_\Phi,\quad (k,b,c)\mapsto (\alpha^{-1}k\alpha,b^\alpha ,c^\alpha).
\end{align}
Our  construction of $\alpha$ depends very much on the dimension $2$ case; in particular, we use 
a classification of J.\ Gierster (1881) of the subgroups of $\mathrm{GL}_2(p)$, extracted from  \cite{gl2}*{Theorems~5.1--5.3}.
\begin{lemma}\label{lemgl2}
Let $p$ be an odd prime and let $K\leq \GL_2(p)$ be a solvable cube-free $p'$-subgroup.
\begin{ithm} 
\item If $K$ is reducible, then $K$ is conjugate to a subgroup of diagonal matrices.
\item If $K$ is irreducible and abelian, then $K$ is conjugate to $\langle s^{(p^2-1)/r}\rangle$ for some $r\mid p^2-1$, where $s$ is a generator of a Singer cycle in $\GL_2(p)$, that is, $\langle s\rangle \cong C_{p^2-1}$.
\item If $K$ is irreducible and non-abelian, then there are three possibilities. First, $K$ might be  conjugate to $G_2\ltimes G_{2'}$ where $G_{2'}$ is an odd order diagonal (but non-scalar) subgroup  and $G_2$ is one of \[\langle\left(\begin{smallmatrix}0&1\\1&0\end{smallmatrix}\right)\rangle,\quad 
  \langle\left(\begin{smallmatrix}0&z\\z&0\end{smallmatrix}\right)\rangle,\quad 
   \langle\left(\begin{smallmatrix}0&-1\\1&0\end{smallmatrix}\right)\rangle,\quad
     \langle\left(\begin{smallmatrix}0&1\\1&0\end{smallmatrix}\right),\left(\begin{smallmatrix}-1&0\\0&-1\end{smallmatrix}\right)\rangle,
\] 
with $z\in\mathbb{Z}/p$ of order 4 (if it exists). Second, $K$ might be conjugate to $\langle S,t\rangle$ where $S$ is a subgroup of a Singer cycle $\langle s\rangle$ and $t$ is an involution such that $N_{\GL_2(p)}(\langle s\rangle)=\langle s,t\rangle$. Third, $K$ might be conjugate to $\langle S,ts^{2l}\rangle$ where $S\leq \langle s\rangle$ has even order and $p-1=4l$ with $l$ odd. 
\end{ithm}
In particular, $N_{\GL_2(p)}(K)/C_{\GL_2(p)}(K)$ is solvable.
%For any odd prime $p$, the Sylow $p$-subgroup of $K$ is characteristic in $K$.
\end{lemma}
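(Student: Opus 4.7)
The classification in (a)--(c) I would simply cite from \cite{gl2}*{Theorems~5.1--5.3}, noting that the reducible case (a) already reduces to Maschke's theorem: since $K$ is a $p'$-group, the $\mathbb{F}_pK$-module $\mathbb{F}_p^2$ is semisimple, so any reducible $K$ preserves a pair of complementary lines and is conjugate into the diagonal subgroup $D$. The enumeration in (b) and (c) is a direct import of Gierster's list of irreducible solvable $p'$-subgroups, obtained by the usual dichotomy (abelian vs.\ non-abelian irreducible; commutant $\mathbb{F}_{p^2}$ vs.\ $\mathbb{F}_p$).

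The substantive content is the ``in particular'' assertion. My plan is to show in every case that $N_{\GL_2(p)}(K)$ is contained in the normalizer of a torus, which is solvable; then $N_{\GL_2(p)}(K)/C_{\GL_2(p)}(K)$, being a subquotient of $N_{\GL_2(p)}(K)$, is automatically solvable. Concretely, $N_{\GL_2(p)}(D)=D\rtimes \langle w\rangle$ with $D\cong C_{p-1}\times C_{p-1}$ has derived length at most $2$, and $N_{\GL_2(p)}(\langle s\rangle)=\langle s,t\rangle$ with $\langle s\rangle\cong C_{p^2-1}$ also has derived length at most $2$.

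In case (a), either $K$ is central (so $C(K)=\GL_2(p)$ and $N/C$ is trivial), or $K$ contains a non-scalar diagonal element whose two eigenlines are the only $K$-invariant $1$-spaces, forcing $N(K)\leq N(D)$. In case (b), Schur's lemma applied to $K$ abelian irreducible on $\mathbb{F}_p^2$ gives $C_{\GL_2(p)}(K)=\mathbb{F}_{p^2}^{\times}\cdot I=\langle s\rangle$, so $N(K)$ normalizes $\langle s\rangle$ and lies in $\langle s,t\rangle$. In case (c), each of the three subcases furnishes a characteristic abelian subgroup $H$ of $K$: the odd-order diagonal factor $G_{2'}$, unique as the Hall $2'$-subgroup of the solvable group $K$ in the monomial subcase; and the cyclic ``Singer part'' $S\leq\langle s\rangle$ in the remaining two subcases, characteristic as the unique cyclic subgroup of its order (once the alternatives in $K$ are ruled out by order considerations). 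Since $H$ is characteristic in $K$ we get $N(K)\leq N(H)$, and $H$ lies either in $D$ or in $\langle s\rangle$, placing $N(K)$ inside the solvable normalizer of one of these tori.

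The principal obstacle I foresee is verifying the characteristicity of $H$ in the non-abelian subcases of~(c), especially when $|S|$ is small and $K$ has many involutions; here I would use intrinsic descriptions such as $H=\mathrm{Fit}(K)$ or ``the unique maximal abelian subgroup of even index at most $2$'' to single out $H$ without appealing to the given embedding, after which the containment $N(K)\leq N(H)$ and the solvability conclusion are immediate from the torus-normalizer structure above.
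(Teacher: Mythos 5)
For parts a)--c) you are doing exactly what the paper does: Lemma \ref{lemgl2} is stated there without any proof at all, as an extraction from \cite{gl2} (Theorems~5.1--5.3), so citing that classification (with your Maschke remark in the reducible case) is precisely the intended justification. The genuine difference is the final clause: the paper simply records the solvability of $N_{\GL_2(p)}(K)/C_{\GL_2(p)}(K)$ as part of the extracted statement, whereas you actually prove it, and your torus-normalizer argument is correct --- cases a) and b) are fine as written, and the obstacle you flag in c) does close along the lines you sketch. A clean way to finish c): in the two Singer subcases take $H$ to be the set of elements of odd order in $K$; since the designated cyclic subgroup has index two, every odd-order element lies in it, so $H$ is its odd part and is characteristic in $K$, and because the unique involution of $\langle s\rangle$ is the scalar $-I$, non-abelianness of $K$ forces $H$ to be non-scalar, hence irreducible, so $C_{\GL_2(p)}(H)=\langle s\rangle$ and $N_{\GL_2(p)}(K)\leq N_{\GL_2(p)}(H)\leq \langle s,t\rangle$; in the monomial subcase $G_{2'}$ is characteristic because it is a \emph{normal} Hall $2'$-subgroup (normality, not solvability, is what gives uniqueness), and then $N_{\GL_2(p)}(K)$ lies in the monomial group exactly as in your case a). Your ``unique cyclic subgroup of its order'' also works, but needs the extra observation that any such subgroup contains the odd part together with an involution centralizing it, and that this centralizer inside $K$ is the cyclic part. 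Alternatively, case c) can be done without characteristic subgroups: there $C_{\GL_2(p)}(K)$ consists of scalars by Schur's lemma, so $N/C$ is the normalizer in ${\rm PGL}_2(p)$ of the nontrivial solvable image of $K$, which by Dickson's classification of subgroups of ${\rm PGL}_2(p)$ is cyclic, dihedral, $A_4$ or $S_4$ (it cannot contain ${\rm PSL}_2(p)$ for $p\geq 5$, and ${\rm PGL}_2(3)$ is itself solvable), hence solvable. In short, your proposal supplies a correct proof of a clause the paper leaves unargued, and otherwise coincides with the paper's citation-based treatment.
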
  

We further need an algorithm of Luks \& Miyazaki's \cite{LM:Normalizer} that
demonstrates how to decide conjugacy of subgroups in solvable permutation groups 
in time polynomial in the input size.

\begin{theorem}\label{thm:Conj-GL2s}
Let $G$, $K$, and $\tilde{K}$ be groups with
\begin{align*}
	K,\tilde K & \leq G=\langle S\rangle =\prod\nolimits_{i=1}^n \mathrm{GL}_2(p_i),
\end{align*}
where $K$ and $\tilde K$ are solvable groups of equal cube-free order coprime to $p_1\cdots p_n$. 
One can decide in polynomial time whether $K$ is conjugate to $\tilde K$ and produce a conjugating element, if it exists.
\end{theorem}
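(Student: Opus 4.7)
The plan is to reduce the conjugacy problem for $K, \tilde K$ in the non-solvable group $G = \prod_{i=1}^n \GL_2(p_i)$ to a subgroup conjugacy test in an auxiliary solvable group, where the Luks--Miyazaki algorithm \cite{LM:Normalizer} applies in polynomial time. First, I project $K$ and $\tilde K$ to each factor, obtaining solvable cube-free $p_i'$-subgroups $K_i, \tilde K_i \leq \GL_2(p_i)$. Lemma \ref{lemgl2} gives them an explicit classification into conjugacy types (reducible/diagonalisable, irreducible abelian subgroups of a Singer cycle, or one of the listed irreducible non-abelian families). By matching types and their associated invariants (orders, Sylow structure, reducibility, Singer data), I test in polynomial time whether $K_i$ and $\tilde K_i$ are conjugate in $\GL_2(p_i)$ and, if so, produce a conjugating element $h_i$. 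If some projection pair is non-conjugate, return ``no''. Otherwise, replace $K$ by $K^h$ with $h = (h_1, \ldots, h_n) \in G$ and assume $K_i = \tilde K_i$ for all $i$.

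Next, set $M := \prod_{i=1}^n K_i$. Any conjugator $g \in G$ with $K^g = \tilde K$ must componentwise normalize $K_i$, so $g \in N := \prod_{i=1}^n N_{\GL_2(p_i)}(K_i) = N_G(M)$; set also $C := \prod_{i=1}^n C_{\GL_2(p_i)}(K_i) = C_G(M)$. By the final assertion of Lemma \ref{lemgl2}, the quotient $\bar N := N/C$ is solvable, and since $C$ centralizes $M \supseteq K \cup \tilde K$, the image $K^g$ depends only on the coset $gC$. The problem therefore reduces to finding $\bar n \in \bar N$ with $\bar n(K) = \tilde K$ inside $M$.

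To open access to a solvable conjugacy test, I form the auxiliary group $H := \bar N \ltimes M$, which is solvable as both $\bar N$ and $M$ are. Viewing $K, \tilde K$ as subgroups of the base $M \leq H$, a short calculation in the semidirect product shows that $K$ and $\tilde K$ are conjugate in $H$ if and only if there exist $\bar n \in \bar N$ and $k \in M$ with $k^{-1} \bar n(K) k = \tilde K$; setting $g := nk \in N$ for any lift $n \in N$ of $\bar n$ then gives $K^g = \tilde K$, so $H$-conjugacy of the embedded subgroups coincides with $G$-conjugacy of the original pair. Since $M \leq G$ inherits a polynomial-size permutation representation while $\bar N$ is a quotient of a permutation group, $H$ can be constructed in our computational model and equipped with a polycyclic constructive presentation via Lemma \ref{lem:permpc}. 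Applying the Luks--Miyazaki algorithm to $H$ decides conjugacy of $K, \tilde K$ and returns a witness $h \in H$ in polynomial time, from which the required conjugator $g \in N \leq G$ is recovered.

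The hard part will be the bridging step from $G$-conjugacy to $H$-conjugacy: subdirect products $K, \tilde K \leq M$ with identical projections may still fail to be conjugate, and the passage to $H$ is precisely the device that repackages this additional rigidity as a subgroup conjugacy problem in a solvable group, thereby opening access to Luks--Miyazaki. A secondary technical point is treating the individual cases of Lemma \ref{lemgl2} in Step 1 with genuine polynomial-time invariants, and ensuring that $H$ is built (as a quotient of a permutation group) without exponential blow-up --- this is where the preliminary algorithms of Section~\ref{sec:prelres} carry most of the weight.
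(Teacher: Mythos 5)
Your proposal is correct and follows essentially the same route as the paper's proof: project to the factors, use Lemma \ref{lemgl2} to align $K_i$ with $\tilde K_i$ factor by factor (even brute-force search over $\GL_2(p_i)$, whose order is polynomially bounded by the permutation degree, suffices here), observe that any conjugator must lie in $N=\prod_i N_{\GL_2(p_i)}(K_i)$, and finish with the Luks--Miyazaki algorithm \cite{LM:Normalizer}. The only divergence is the final step: the paper applies \cite{LM:Normalizer} directly inside $N$, which it builds explicitly from Lemma \ref{lemgl2} and observes to be solvable, whereas you pass to the auxiliary group $(N/C)\ltimes M$ --- a correct variant whose bridge to $G$-conjugacy you verify, and which incidentally also covers degenerate scalar or trivial projections where $N$ itself need not be solvable, at the modest extra cost of realizing this quotient-built group within the paper's computational model before invoking \cite{LM:Normalizer}.
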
 
\begin{proof}
As above, let $K_i$ and $\tilde K_i$ be the projections of $K$ and $\tilde{K}$, respectively, to the factor
$\mathrm{GL}_2(p_i)$.   For each $i$, based on the classification given in Lemma \ref{lemgl2}, we apply basic linear algebra methods to solve for  $\alpha_i\in\mathrm{GL}_2(p_i)$
such that $K_i^{\alpha_i}=\tilde K_i$; we also construct $N_i=N_{\GL_2(p_i)}(\tilde{K}_i)$ based on Lemma~\ref{lemgl2}. If we cannot find a particular $\alpha_i$, then $K$ and $\tilde K$ are not conjugate and we return that. Once all the $\alpha_i$ have been computed, we replace $K$ by $K=K^{\alpha_1\cdots \alpha_n}$, so that we can assume that $K_i=\tilde K_i$ for all $i$. Note that  $K$ and $\tilde K$ are conjugate if and only if they are conjugate in $N=\prod_{i=1}^n N_i$, which is
solvable by Lemma~\ref{lemgl2}. Now we apply the algorithm of \cite{LM:Normalizer}*{Theorem~1.3(ii)} 
to solve for $\beta\in N$ such that $K^{\beta}=\tilde{K}$, and return $\alpha_1\cdots\alpha_n\beta$. If we cannot find such a $\beta$, then $K$ and $\tilde{K}$ are not conjugate, and we return false. Lastly, we comment on the timing. Note that we can also locate appropriate $\alpha_i$ by a polynomial-time brute-force search in $\mathrm{GL}_2(p_i)$: the latter has order at most $p_i^4\leq d^4$, where $d$ is the size of the permutation domain of $G$. We make a total of $n\leq \log |G|$ such searches, followed by the polynomial-time algorithm of \cite{LM:Normalizer}. The claim follows.
\end{proof}

 %%%%%%%%%%%%%%%%%%%%%%%%%%%%%%%%%%%%%%%%%%%%%%%%%%%%%%%%%%%%%%%%%%%%
%%%%%%%%%%%%%%%%%%%%%%%%%%%%%%%%%%%%%%%%%%%%%%%%%%%%%%%%%%%%%%%%%%%%
%%%%%%%%%%%%%%%%%%%%%%%%%%%%%%%%%%%%%%%%%%%%%%%%%%%%%%%%%%%%%%%%%%%%
%%%%%%%%%%%%%%%%%%%%%%%%%%%%%%%%%%%%%%%%%%%%%%%%%%%%%%%%%%%%%%%%%%%%
%%%%%%%%%%%%%%%%%%%%%%%%%%%%%%%%%%%%%%%%%%%%%%%%%%%%%%%%%%%%%%%%%%%%

%%%%%%%%%%%%%%%%%%%%%%% 
%%%%%%%%%%%%%%%%%%%%%%%%%%%%%%%%%%%%%%%%  
\section{Isomorphism testing of cube-free groups: solvable groups} \label{sec:solv}
Throughout this section $L$ and $\tilde L$ are  finite solvable groups of cube-free 
order, given as permutation groups. To decide isomorphism, we first want to use the 
algorithm of Section~\ref{secsolvFF} to determine whether the Frattini quotients 
$L_\Phi$ and $\tilde L_\Phi$ are isomorphic. For this we need the Frattini subgroups.

\subsection{Frattini subgroups}\label{remPhi} 
Since we assume  permutation groups as input, we need a 
polynomial-time algorithm to compute Frattini subgroups of solvable permutation 
groups of cube-free order. A candidate algorithm has been provided by Eick  \cite{Phi}*{Section~2.4} for groups 
given by a polycyclic (pc) presentation. To adapt to a permutation setting we 
have two choices: replace every step of that algorithm with polynomial-time
variants for permutation groups, or apply the algorithm 
{\em in-situ} by appealing to a two-way isomorphism between our original permutation 
group and a constructive pc-presentation as afforded to us by Lemma \ref{lem:permpc}.
Note that for the efficiency of the inverse isomorphism, elements in a pc-group are 
straight-line programs (SLPs) in the generators, so evaluation is determined on the generators 
and computed in polynomial time.  Thus, whenever we take products in the pc-group, we 
actually carry out permutation multiplications and sift these into the polycyclic 
generators by applying the isomorphism back to the pc-group.  This avoids the 
potential exponential complexity of collection in pc-groups, see the 
discussion in~\cite{collection}.  That the algorithm in \cite{Phi}*{Section~4.2} uses a polynomial number of pc-group
operations follows by considering its major steps.  It relies on constructing 
complements of abelian subgroups (shown in Proposition~\ref{prop:complement} to be in polynomial time), 
and it applies also module decompositions (which can be done in polynomial time see 
\cite{LM:Normalizer}*{Theorem~3.7 \& Section~3.5}), and finally computing cores
\cite{KL:quo}*{P5}.  Therefore Eick's algorithm is in fact a polynomial-time algorithm for groups of permutations, 
and we cite it as such in what follows.

Once $\Phi(L)$ and $\Phi(\tilde L)$ have been constructed, we can compute the 
quotients  $L_\Phi$ and $\tilde L_\Phi$, see \cite{KL:quo}, and use the algorithms 
of Section \ref{secsolvFF} to test isomorphism. If we have determined
that $L_\Phi\not\cong \tilde L_\Phi$, then we can report that $L\not\cong \tilde L$. 
Thus, in the following we assume we found an isomorphism 
$\varphi\colon L_\Phi\to\tilde L_\Phi$, so we also know that $L\cong \tilde L$ by Section \ref{secStructure}. In the next sections we  describe how to 
construct an isomorphism $\hat{\varphi}\colon L\to \tilde L$ such that $\hat{\varphi}$ 
factors through $\varphi$ in the sense that $\Phi(\tilde{L})(g\hat{\varphi}) = (\Phi(L)g)\varphi $ 
for all $g\in L$.  This  condition is what allows us to not only solve for some 
isomorphism between $L$ and $\tilde{L}$, but to also lift generators for the automorphism
group of $L$ and thus prescribe (generators for) the entire coset of isomorphisms 
$L\to \tilde{L}$.  Our approach to computing $\hat{\phi}$ is to work with each prime 
divisor of $|\Phi(L)|$.  We begin with a key observation about these primes and recall 
the Frattini extension structure of  groups of cube-free order.

%%%%%%%%%%%%%%%%%%%%%%%%%%%%%%%%%%%%%%%%%%5
\subsection{Frattini extension structure}\label{secFES}

As above, write $A_1\Yup\ldots \Yup A_s$ for 
any subdirect product of groups $A_1,\ldots,A_s$. For a group $Y$ and prime $p$ dividing $|Y|$ let $Y_p$ be a Sylow $p$-subgroup of $Y$. It follows from \cite[9.2]{rob} that every finite solvable group has a Sylow basis, and it follows from  \cite{curran,cf2} that every solvable cube-free group $Y$ has one of the following abelian Sylow towers
\begin{eqnarray*}
Y=\begin{cases} 
Y_{r_1}\ltimes Y_{r_2}\ltimes \ldots \ltimes Y_{r_\ell}            &{\text{if }|Y|\text{ odd}}\\
Y_2\ltimes Y_{r_1}\ltimes Y_{r_2}\ltimes \ldots \ltimes Y_{r_\ell} &{\text{if }|Y|\text{ even, $Y_2\not\!\!\unlhd Y$}}\\
Y_{r_1}\ltimes Y_{r_2}\ltimes \ldots \ltimes Y_{r_\ell}\ltimes Y_2\; \text{(with $Y_2=C_2^2$)}&{\text{if }|Y|\text{ even, $Y_2\unlhd Y$}}
\end{cases}
\end{eqnarray*}
where $r_1<\ldots<r_\ell$ are the odd prime divisors of $|Y|$ and $\{(Y_2),Y_{r_1},\ldots,Y_{r_\ell}\}$ forms a Sylow basis of $Y$. Proposition \ref{prop:Sylow-tower} provides an algorithm to construct such a Sylow tower.

\begin{lemma}\label{lemFEF} 
Let $L$ be  Frattini-free and solvable, and let $Y^\ast$ be a cube-free Frattini extension of $Y$, that is, $Y^\ast/\Phi(Y^\ast)\cong Y$. If $p\nmid |\Phi(Y^\ast)|$, then $Y^\ast_p\cong Y_p$; otherwise $Y_p\cong C_p$ and $Y^\ast_p\cong C_{p^2}$.
\end{lemma}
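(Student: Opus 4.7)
The plan is to exploit the general structural properties of cube-free Frattini extensions recalled in Section~\ref{secFES}: since $Y^\ast$ is a cube-free solvable group whose Frattini quotient is $Y$, the Frattini subgroup $\Phi(Y^\ast)$ is abelian, has square-free order, and its order divides $|Y|$. In particular, its $p$-part $Z=\Phi(Y^\ast)_p$ is a characteristic subgroup of $\Phi(Y^\ast)$, hence normal in $Y^\ast$, and has order $1$ or $p$; being a normal $p$-subgroup, $Z$ is contained in every Sylow $p$-subgroup of $Y^\ast$, so that $Y^\ast_p\cap\Phi(Y^\ast)=Z$.

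First I would dispatch the case $p\nmid |\Phi(Y^\ast)|$: here $Z=1$, so the restriction of the canonical projection $Y^\ast\to Y$ to a Sylow $p$-subgroup $Y^\ast_p$ is injective. Its image is a $p$-subgroup of $Y$ of the same order as $Y^\ast_p$, hence a Sylow $p$-subgroup of $Y$, and this yields $Y^\ast_p\cong Y_p$ up to conjugation in $Y$.

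Now suppose $p\mid |\Phi(Y^\ast)|$, so that $|Z|=p$ and, since $|\Phi(Y^\ast)|$ divides $|Y|$, also $p\mid |Y|$. Because $Y^\ast$ is cube-free, $|Y^\ast_p|\leq p^2$; combined with $|Y_p|=|Y^\ast_p|/|Z|$ and $|Y_p|\geq p$, this forces $|Y_p|=p$ (so $Y_p\cong C_p$) and $|Y^\ast_p|=p^2$. A priori $Y^\ast_p$ is then either $C_{p^2}$ or $C_p\times C_p$, and the main task is to rule out the latter.

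This final step is the crux, and I would handle it by appealing to Gasch\"utz's theorem on complements of abelian normal subgroups. If $Y^\ast_p\cong C_p\times C_p$, then the order-$p$ subgroup $Z$ is a direct factor of $Y^\ast_p$ and so admits a complement inside $Y^\ast_p$. Since $Y^\ast_p$ is a Sylow $p$-subgroup containing the normal $p$-subgroup $Z$, Gasch\"utz's theorem then furnishes a complement $H$ of $Z$ in all of $Y^\ast$. But $Z\leq\Phi(Y^\ast)$ is contained in every maximal subgroup of $Y^\ast$, so the equality $Y^\ast=ZH$ would force $H$ itself to lie in no maximal subgroup, yielding $H=Y^\ast$ and contradicting $H\cap Z=1$. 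Hence $Y^\ast_p\cong C_{p^2}$, completing the dichotomy.
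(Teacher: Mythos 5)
Your proof is correct, and it takes a genuinely different route from the paper's. The paper first reduces, via the subdirect-product description of Frattini extensions in \cite{cf}*{Theorem 12}, to the case $\Phi(Y^\ast)\cong C_p$, and then rules out $Y^\ast_p\cong C_p\times C_p$ by a hands-on analysis of the abelian Sylow tower: using \cite{cf}*{Lemma 5 \& Theorem 14} it shows the earlier terms of the tower act diagonally on $Y^\ast_p$ and the later terms centralize it modulo themselves, so no nontrivial element of $Y^\ast_p$ could be a non-generator, contradicting $\Phi(Y^\ast)\leq Y^\ast_p$; the prime $2$ is treated separately because the tower has a different shape. You instead observe that $Z=\Phi(Y^\ast)_p$ is a normal (indeed characteristic) abelian $p$-subgroup of $Y^\ast$ of order $p$, that in an elementary abelian Sylow $p$-subgroup $Z$ would have a complement, and that Gasch\"utz's splitting theorem for abelian normal $p$-subgroups (a complement in a Sylow $p$-subgroup lifts to a complement in the whole group) would then produce a complement to a nontrivial subgroup of $\Phi(Y^\ast)$ -- impossible by the non-generator property. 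Your counting step is also sound: $Y^\ast_p\cap\Phi(Y^\ast)=Z$ because the intersection of a Sylow subgroup with a normal subgroup is Sylow there, and $p\mid |Y|$ because $|\Phi(Y^\ast)|$ divides $|Y|$, a fact both you and the paper import from the structure results recalled in Section \ref{secStructure}. What your route buys is brevity and uniformity: it needs none of the finer extension data from \cite{cf} beyond the divisibility/square-freeness of $\Phi(Y^\ast)$, it requires no reduction to $\Phi(Y^\ast)\cong C_p$, and it handles $p=2$ and odd $p$ identically; the cost is an appeal to a (classical, but nontrivial) external theorem of Gasch\"utz, whereas the paper's argument stays entirely inside the cube-free machinery it has already set up and exhibits the explicit diagonal/trivial actions along the Sylow tower.
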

\begin{proof}  
 Recall  that every prime dividing $|\Phi(Y^\ast)|$ must divide $|Y|$, thus $\Phi(Y^\ast)$ is square-free and the Sylow tower of $Y^\ast$ looks similar to that of $Y$, where $Y^\ast_p\cong Y_p$ if $p\nmid |\Phi(Y^\ast)|$, and $Y_p\cong C_p$ and $Y^\ast_p$ abelian of order $p^2$ otherwise. We prove that $Y^\ast_p\cong C_{p^2}$.   We use the previous notation and consider $M=\Phi(Y^\ast)=C_{p_1}\times\ldots\times C_{p_m}$ as a $Y$-module. It is shown in \cite[Theorem 12]{cf} that $Y^\ast$ is a subdirect product of Frattini extensions of $Y$ by $C_{p_i}$. Thus, to prove the lemma, it suffices to consider $M=C_p$  for some prime $p$. First, suppose that $p=r_i$ is odd. In this case, $Y_p\cong C_p$ and $Y^\ast_p$ is abelian of order $p^2$. Suppose, for a contradiction, that $Y^\ast_p\cong C_p^2$.   It follows from \cite[Lemma~5 \& Theorem 14]{cf} that $Y^\ast$ is a non-split extension of $Y$ by $M$ such that $N_Y(Y_p)$ acts on $M$ as on $Y_p$. This implies the following: considering  $Y^\ast_p=Y^\ast_{r_i}=(\mathbb{Z}/p)^{2}$ as an $\mathbb{Z}/p$-space, there is a basis $\{m,y\}$ such that $M=\langle m\rangle$ and every $g\in  (Y_2^\ast\ltimes) Y^\ast_{r_1}\ltimes \ldots\ltimes Y^\ast_{r_{i-1}}\leq N_{Y^\ast}(Y^\ast_p)$ acts on that space as a matrix $\tilde g=\left(\begin{smallmatrix} \alpha&\beta\\0&\alpha\end{smallmatrix}\right)$ for some $\alpha\in (\mathbb{Z}/p)^\times$ and $\beta\in \mathbb{Z}/p$. Since $|Y^\ast|$ is cube-free, $g$ has order coprime to $p$, and hence   $\beta=0$, that is, $g$ acts diagonally on $Y^\ast_p$. %If $Y^\ast=Y_2^\ast\ltimes [Y_{r_1}^\ast\ltimes\ldots\ltimes Y_{r_\ell}^\ast]$, then, similarly, $g\in Y_2^\ast$ stabilises $\langle m\rangle =\Phi(Y^\ast)$ and therefore acts as a diagonal matrix on \[C_p^2\cong Y_p^\ast\cong(Y_p^\ast\ltimes Y^\ast_{r_{i+1}}\ltimes \ldots\ltimes Y^\ast_{r_\ell})/(Y^\ast_{r_{i+1}}\ltimes \ldots\ltimes Y^\ast_{r_\ell})).\] 
    Moreover, $W=Y^\ast_{r_{i+1}}\ltimes\ldots\ltimes Y_{r_\ell}^\ast(\ltimes Y_2^\ast)$ centralizes $Y^\ast_p$ modulo $W$. In conclusion, no nontrivial element in $Y^\ast_p$ is a non-generator of $Y^\ast$, contradicting $\Phi(Y^\ast)\leq Y^\ast_p$, see \cite[Proposition 2.44]{handbook}. This contradiction proves $Y^\ast\cong C_{p^2}$. Lastly, suppose $M=C_2$; in this case  $Y_2\cong C_2$ and $Y^\ast=Y^\ast_2\ltimes Y^\ast_{r_1}\ltimes\ldots \ltimes Y^\ast_{r_\ell}$. If $Y^\ast_2\cong C_2^2$, then the same argument  shows that no nontrivial element in $Y^\ast_2$ is a non-generator of $Y^\ast$, contradicting $\Phi(Y^\ast)\leq Y^\ast_2$. Thus, $Y^\ast_2\cong C_4$. 
\end{proof}

%%%%%%%%%%%%%%%
\subsection{Constructing the isomorphism}

Recall that $L\cong \tilde L$ if and only if the isomorphism $\psi_\Phi$ in Step (2) exists. Suppose $\psi_\Phi$ has been constructed as described in Section \ref{secsolvFF}, that is, we know that $L\cong \tilde L$. As explained in the proof of Lemma \ref{lemFEF}, the groups $L$ and $\tilde L$ are iterated Frattini extensions of $L_\Phi$ and $\tilde L_\Phi$, respectively, by cyclic groups of prime order; cf.\ \cite[Definition 4]{cf}. Starting with $\psi_\Phi$, we iteratively construct isomorphisms of these Frattini extensions until eventually we obtain an isomorphism $L\to \tilde L$. Thus, we consider the following situation: let $Y$ and $\tilde Y$ be two solvable cube-free groups and let $Y^\ast$ and $\tilde Y^\ast$ be cube-free Frattini extensions of $Y$ and $\tilde Y$, respectively, by $M=C_p$. We assume that we have an isomorphism $\varphi\colon Y\to \tilde Y$; we know that $Y^\ast\cong \tilde Y^\ast$, and we aim to construct an isomorphism $Y^\ast\to\tilde Y^\ast$. The following preliminary lemma will be handy.

\begin{lemma}\label{lemOrd}
Let $G$ be a group and $P,Q\leq G$ such that $P$ is a cube-free $p$-group and $Q=\langle w\rangle$ is cyclic of order $q^2$, for distinct primes $p$ and $q$. Suppose $PQ=QP$ and $A=\langle w^q\rangle$ is normal in~$PQ$.
\begin{ithm}
\item We have $PQ=P\ltimes Q$ or $PQ=Q\ltimes P$. 
\item If $PQ=Q\ltimes P$, then $A$ acts trivially on $P$. %If $a\in Q$ and $b\in P$, then $a^b=[b,a^{-1}]a$. 
\item If $PQ=P\ltimes Q$, then the action of $P$ on $Q$ is uniquely determined by its action on $Q/A$. 
\end{ithm}
\end{lemma}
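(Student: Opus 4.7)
My plan is to pass to the quotient $\bar G = PQ/A$, which has order $|P|q$ with $|P|\in\{1,p,p^2\}$ and $p\neq q$. A standard Sylow count shows that any group of order $q$, $pq$, or $p^2q$ (with distinct primes $p,q$) has a normal Sylow subgroup; the only delicate case is $|\bar G|=p^2q$, where the hypothetical situation of neither Sylow being normal forces $p=2$, $q=3$ and is ruled out by a standard element count in a group of order $12$. Thus in $\bar G$ either $Q/A\unlhd\bar G$, which lifts to $Q\unlhd PQ$ and yields $PQ=P\ltimes Q$, or $PA/A\unlhd\bar G$, which lifts to $PA\unlhd PQ$. In the latter subcase I would upgrade to $P\unlhd PQ$ as follows: since $A\unlhd PA$ and $A\cap P=1$ (coprime orders), $PA=A\rtimes P$, and by Sylow's theorem every Sylow $p$-subgroup of $PA$ has the form $aPa^{-1}$ for some $a\in A$. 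Since $PA\unlhd PQ$, the conjugate $wPw^{-1}$ is a Sylow $p$-subgroup of $PA$, hence equals $aPa^{-1}$ for some $a=w^{jq}\in A$. Then $\tilde w := a^{-1}w = w^{1-jq}$ normalizes $P$; because $\gcd(1-jq,q)=1$, the element $\tilde w$ has order $q^2$ in the cyclic group $Q$, so $\langle\tilde w\rangle=Q$ and $Q\leq N_{PQ}(P)$, giving $P\unlhd PQ$ and $PQ=Q\ltimes P$.

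\textbf{Part (b).} If $PQ=Q\ltimes P$, then $P$ and $A$ are both normal in $PQ$, and $A\cap P=1$ since $|A|=q$ is coprime to the $p$-power $|P|$. Therefore $[A,P]\leq A\cap P=1$, so $A$ centralizes $P$, i.e.\ acts trivially on $P$ by conjugation.

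\textbf{Part (c).} If $PQ=P\ltimes Q$, the conjugation action of $P$ on $Q$ is a homomorphism $\phi\colon P\to \Aut(Q)\cong(\mathbb{Z}/q^2)^\times$ whose image is a $p$-group. The reduction $\rho\colon(\mathbb{Z}/q^2)^\times\to(\mathbb{Z}/q)^\times=\Aut(Q/A)$ has kernel of order $q$ (trivial when $q=2$); since $|\phi(P)|$ is a power of $p\neq q$, the restriction of $\rho$ to $\phi(P)$ is injective. Hence $\phi$ is uniquely determined by $\rho\circ\phi$, which is precisely the induced action of $P$ on $Q/A$.

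The main obstacle is part (a), and within (a) the decisive step is the lift from $PA\unlhd PQ$ to $P\unlhd PQ$. This relies crucially on $Q$ being cyclic of order $q^2$: because $A$ is the unique subgroup of $Q$ of index $q$, every element of $Q\setminus A$ generates $Q$, which is what lets the modification $\tilde w = a^{-1}w$ promote normalization of $P$ by a single coset representative into normalization by all of $Q$. Parts (b) and (c) are formal consequences of the split type once (a) is established.
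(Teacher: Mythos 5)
Your proof is correct, and in part (a) it takes a genuinely different route from the paper. The paper settles (a) in one line: $PQ$ is a solvable cube-free group on exactly two primes, so the Sylow-tower structure results for solvable cube-free groups recalled earlier in the paper (Curran, Qiao--Li) force one of the two Sylow subgroups to be normal, and coprimality of $|P|$ and $|Q|$ then gives the semidirect decomposition. You instead argue from scratch: pass to $PQ/A$, of order $|P|q$ with $|P|\in\{1,p,p^2\}$, apply the classical Sylow counts for orders $pq$ and $p^2q$, and in the case $PA\unlhd PQ$ upgrade to $P\unlhd PQ$ by a Frattini-type argument, replacing $w$ by $\tilde w=a^{-1}w$ with $a\in A$, which still generates $Q$ since every element of $Q\setminus A$ does. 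Note that this is exactly where you use the hypothesis $A\unlhd PQ$, which the paper's proof of (a) does not need; conversely, you never invoke the external cube-free structure theorems, only $|P|\leq p^2$, so your argument is more elementary and self-contained. Parts (b) and (c) agree in substance with the paper: in (b) you spell out the coprimality argument $[A,P]\leq A\cap P=1$ that the paper leaves terse, and in (c) the paper works explicitly with a primitive root of $(\mathbb{Z}/q^2)^\times$ while you phrase the same injectivity via the reduction map $\rho$. One small slip: the kernel of $\rho$ has order $q$ for every $q$, including $q=2$ (it is not trivial there); this is harmless, since all your argument needs is that $\ker\rho$ is a $q$-group and hence meets the $p$-group $\phi(P)$ trivially.
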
   
\begin{proof}
Since $PQ$ is cube-free, part a) follows from the structure results mentioned in Section~\ref{secFES}. For part b), note that $Q$ and $Q/A$ both act on $P$; this forces that $A$ acts trivially on $P$. Now consider part c). Recall that $\Aut(Q)$ is cyclic of order $q(q-1)$, generated by $\beta\colon Q\to Q$, $w\mapsto w^k$, where $k$ is some primitive root modulo $q^2$. Since $PQ$ is cube-free, the element $g\in P$ acts on $Q$ via an automorphism $\alpha\in\Aut(Q)$ of order coprime $q$.  Thus, $\alpha$  lies in the subgroup $T\leq \Aut(Q)$ of order $q-1$, and there is a unique $e\in\{1,\ldots,q-1\}$ such that $\alpha=(\beta^q)^e$. Now $(wA)\alpha= (wA)^i$ with $i\in\{0,\ldots,q-1\}$ yields $i=k^{eq}\bmod q$.  Since $k^q$ is a primitive root modulo $q$, it follows that for any given $i\in\{1,\ldots,q-1\}$ there is a unique $e\in\{1,\ldots,q-1\}$ such that $i\equiv k^{eq}\bmod p$, hence for a given $i\in\{0,\ldots,q-1\}$ there is a unique automorphism $\alpha\in\Aut(Q)$ with $(wA)\alpha=(wA)^i$.
\end{proof}

\begin{proposition}\label{prop:cyclicFE}
Let $Y$ and $\tilde Y$ be two solvable cube-free groups and let $Y^\ast$ and $\tilde Y^\ast$ be cube-free Frattini extensions of $Y$ and $\tilde Y$, respectively, by a group isomorphic to $C_p$.  Algorithm \ref{algo:lift-cyclic} is  a polynomial-time algorithm which, given an isomorphism $\varphi\colon Y\to \tilde Y$, returns an isomorphism $\hat\varphi\colon Y^\ast\to\tilde Y^\ast$.
\end{proposition}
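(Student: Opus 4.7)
The strategy is to exploit the cube-free Sylow tower from Section~\ref{secFES}: by Lemma~\ref{lemFEF}, the Frattini extension by $M\cong C_p$ only modifies the Sylow $p$-subgroup (turning $C_p$ into $C_{p^2}$), while every other Sylow subgroup of $Y^\ast$ projects isomorphically onto the corresponding Sylow subgroup of $Y$ via the quotient map. I would therefore build $\hat\varphi$ Sylow-by-Sylow, essentially copying $\varphi$ on the Sylow subgroups away from $p$ and lifting $\varphi|_{Y_p}$ to a cyclic isomorphism $C_{p^2}\to C_{p^2}$ on the Sylow $p$-subgroup. The existence of $\hat\varphi$ is guaranteed by the uniqueness result in \cite[Theorem~11]{cf}, so the task is purely constructive.

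First, using Proposition~\ref{prop:Sylow-tower} I would compute abelian Sylow bases $\{P_q\}$ of $Y^\ast$ and $\{\tilde P_q\}$ of $\tilde Y^\ast$ realising the tower shapes from Section~\ref{secFES}. Let $\pi\colon Y^\ast\to Y$ and $\tilde\pi\colon \tilde Y^\ast\to \tilde Y$ be the quotient maps. For each prime $q\ne p$, the restrictions $\pi|_{P_q}$ and $\tilde\pi|_{\tilde P_q}$ are isomorphisms by Lemma~\ref{lemFEF}, so I set $\hat\varphi|_{P_q}=(\tilde\pi|_{\tilde P_q})^{-1}\circ\varphi\circ\pi|_{P_q}$. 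For $q=p$, both $P_p$ and $\tilde P_p$ are cyclic of order $p^2$; I fix generators $w$ and $\tilde w$, compute the unique $k\in\{1,\ldots,p-1\}$ with $\varphi(\pi(w))=\tilde\pi(\tilde w)^k$, and set $\hat\varphi(w)=\tilde w^k$, which generates $\tilde P_p$ since $\gcd(k,p)=1$. Pasting these Sylow-level maps produces a candidate map on a generating set of $Y^\ast$.

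The hard part will be verifying that this candidate extends to a well-defined homomorphism $Y^\ast\to\tilde Y^\ast$, i.e.\ that it respects the conjugation relations imposed by the Sylow tower. Relations between $P_q$ and $P_{q'}$ with $q,q'\ne p$ hold automatically because $\varphi$ is a homomorphism and both $\pi$ and $\tilde\pi$ are compatible with conjugation. The nontrivial cases involve $P_p$, and this is exactly where Lemma~\ref{lemOrd} is essential: when $P_p$ is normal in $P_pP_{q'}$, part~(c) shows that the action of $P_{q'}$ on $P_p=C_{p^2}$ is uniquely determined by its action on $P_p/M=Y_p$; when $P_{q'}$ is normal in $P_pP_{q'}$, part~(b) applies (with $M$ playing the role of $A$, normal in $Y^\ast$ and hence in $P_pP_{q'}$) and shows that $M$ acts trivially on $P_{q'}$, so the action of $P_p$ again factors through $Y_p$. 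In both situations the relevant action is preserved by $\hat\varphi$ because $\varphi$ respects the action on $Y_p$ by construction, and this is what forces the precise lift chosen on $P_p$. Bijectivity is automatic by a cardinality argument once $\hat\varphi$ is shown to be a surjective homomorphism. All subroutines invoked — Sylow basis computation, evaluation of $\pi$ and $\tilde\pi$ in quotients of permutation groups, and the discrete logarithm in $(\mathbb{Z}/p)^\times$ for the small prime $p\le|\Omega|$ — run in polynomial time in the input size, giving overall polynomial complexity.
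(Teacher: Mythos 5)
Your overall route is the paper's route --- Sylow tower via Proposition~\ref{prop:Sylow-tower}, copy $\varphi$ on the Hall $p'$-part, lift cyclically on the Sylow $p$-subgroup $C_{p^2}$, and use Lemma~\ref{lemOrd} to control the mixed conjugation relations --- but there is a genuine gap at the very first step. You compute Sylow bases $\{P_q\}$ of $Y^\ast$ and $\{\tilde P_q\}$ of $\tilde Y^\ast$ \emph{independently}. Sylow (and Hall) subgroups are only determined up to conjugacy, so there is no reason why $\tilde\pi(\tilde P_q)$ should equal $\varphi(\pi(P_q))$; generically these are merely conjugate subgroups of $\tilde Y$. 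Consequently the defining formula $\hat\varphi|_{P_q}=(\tilde\pi|_{\tilde P_q})^{-1}\circ\varphi\circ\pi|_{P_q}$ is not even well-defined (the image of $\varphi\circ\pi|_{P_q}$ need not lie in $\tilde\pi(\tilde P_q)$), and for $q=p$ the exponent $k$ with $\varphi(\pi(w))=\tilde\pi(\tilde w)^k$ need not exist. One cannot repair this by conjugating each $\tilde P_q$ separately, since the corrections must be by a single element to keep a permutable Sylow system, and finding such an element is an additional algorithmic problem you do not address. The paper avoids all of this by \emph{constructing} a compatible Sylow basis of $\tilde Y^\ast$ from the given one on $Y^\ast$: it takes the full $\tilde\Gamma$-preimages $\tilde Q_j$ of the $\varphi$-images of the Hall subgroups $Q_j=\prod_{k\ne j}Y_k^\ast$, splits $\tilde Q_i=\tilde H\ltimes\tilde A$ via the complement algorithm (Proposition~\ref{prop:complement}), and sets $\tilde Y_j^\ast$ to be suitable intersections; this forces $\tilde Y_j^\ast\tilde\Gamma=Y_j^\ast\Gamma\varphi$ for all $j$, which is exactly the compatibility your construction silently assumes.

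A secondary weakness: even granting compatibility, your verification that the pasted map is a homomorphism is only gestured at. The paper defines $\hat\varphi$ on the unique factorization $x=ha^e$ ($h$ in the Hall $p'$-subgroup $H$, $a$ a generator of the cyclic Sylow $p$-subgroup) and proves multiplicativity by induction on the number of Sylow factors of $h$, with two explicit base cases: when the $q$-part normalizes the $p$-part, Lemma~\ref{lemOrd}(c) identifies the action on $C_{p^2}$ from the action modulo $A$ (this matches your sketch); but when the $p$-part normalizes the $q$-part one must check that commutators $[k,a^{-e}]$ are mapped correctly under $\pi\varphi\tilde\pi^{-1}$, which requires the explicit computation with preimages under $\tilde\pi$, not just the observation that $A$ acts trivially. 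Your phrase ``the relevant action is preserved because $\varphi$ respects the action on $Y_p$'' papers over precisely this step. So: right skeleton, but the compatibility of the two Sylow bases must be constructed (not assumed), and the homomorphism check needs the factorization-and-induction argument to be carried out.
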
 
\begin{proof} We compute the Frattini subgroups of $Y^\ast$ and $\tilde Y^\ast$, and the Sylow $p$-subgroups $A\leq \Phi(Y^\ast)$ and $\tilde A\leq\Phi(\tilde Y^\ast)$, respectively, see Section \ref{remPhi}. By assumption, $A\cong \tilde A\cong C_p$, and we can assume that $Y=Y^\ast/A$ and $\tilde Y=\tilde Y^\ast/\tilde A$. As explained above, the existence of $\varphi\colon Y\to \tilde Y$ implies that $Y^\ast$ and $\tilde Y^\ast$ are isomorphic. Use the algorithm of Proposition \ref{prop:Sylow-tower} to construct  a Sylow tower  $Y^\ast=Y_1^\ast\ltimes \ldots \ltimes Y_n^\ast$; for each $j$ let $p_j$ be a prime such that $Y_j^\ast$ is a Sylow $p_j$-subgroup. Let $p=p_i$, and recall from Lemma~\ref{lemFEF} that $Y_i^\ast$  is  cyclic; find a generator $Y_i^\ast=\langle a\rangle$ and note that $A=\langle a^p\rangle\leq Y_i^\ast$. For every $j$ define $Q_j=\prod\nolimits_{k\ne j} Y_k^\ast$; this is a Hall $p_j'$-subgroup of $Y^\ast$. Such a set of Hall $r'$-subgroups (one for each prime divisor $r$ of the group order) is a called a {\it Sylow system} in \cite[Section 9.2]{rob}); in particular,  we can recover each $Y_j^\ast$ as  $Y_j^\ast=\bigcap_{k\ne j}Q_k$. 
   
 Since $Y_1^\ast,\ldots,Y_n^\ast$ form a Sylow tower of $Y^\ast$, every $x\in Y^\ast$ has a unique factorization $x=h a^{e}$ where $h\in H=Q_i$ and $a^e\in Y_i^\ast$ with $0\leq e\leq p^2-1$; we will use this decomposition later when we define an isomorphism $\hat\varphi\colon Y^\ast\to \tilde Y^\ast$. We will construct $\hat\varphi$  via a Sylow basis of $\tilde Y^\ast$ which is {\em compatible} with the above Sylow basis of $Y^\ast$; we explain below what this means.
 
Let $\Gamma\colon Y^\ast\to Y^\ast/A=Y$ be the natural projection, so that $\{Q_1\Gamma\varphi,\ldots, Q_n\Gamma\varphi\}$ forms  a Sylow system of $\tilde Y^\ast/\tilde A$. For each $j$ we define  $\tilde Q_j\leq \tilde Y^\ast$ to be the full preimage of $Q_j\Gamma\varphi$ under the natural projection $\tilde\Gamma\colon \tilde Y^\ast\to \tilde Y^\ast/\tilde A=\tilde Y$. Clearly, if $j\ne i$, then $\tilde Q_j$ is a Hall $p_j'$-subgroup of $\tilde Y^\ast$. Moreover, $\tilde Q_i= \tilde H \ltimes \tilde A$ where $\tilde H$ is some Hall $p'$-subgroup of $\tilde Q_i$ and of $\tilde Y^\ast$; we compute $\tilde H$ in $\tilde Q_i$ by first computing $\tilde A\leq \tilde Q_i$ as a Sylow $p$-subgroup and then $\tilde H$ as a complement to $\tilde A$ in $\tilde Q_i$. We define $\tilde Y_i^\ast=\bigcap\nolimits_{k\ne i} \tilde Q_k$ and \[\tilde Y_j^\ast = \tilde H\cap \bigcap\nolimits_{k\ne j,i} \tilde Q_k\quad\text{for each $j\ne i$}.\]It follows from \cite[9.2.1]{rob} that $\{\tilde Y_1^\ast,\ldots,\tilde Y_n^\ast\}$ is a set of pairwise permutable Sylow subgroups with $\tilde Y_j^\ast\tilde \Gamma=Y_j^\ast\Gamma\varphi$ for all $j$. In particular, we can apply Lemma \ref{lemOrd} and it follows from our construction that for all $u\ne v$ we have  $\tilde Y_u^\ast\tilde Y_v^\ast=\tilde Y_u^\ast\ltimes \tilde Y_v^\ast$ if and only if $Y_u^\ast Y_v^\ast=Y_u^\ast\ltimes Y_v^\ast$, and $\tilde Y_u^\ast\tilde Y_v^\ast=\tilde Y_v^\ast\ltimes \tilde Y_u^\ast$  if and only if $Y_u^\ast Y_v^\ast=Y_v^\ast\ltimes Y_u^\ast$.  We say that these two Sylow bases are \emph{compatible}.

Let $\pi$ and $\tilde\pi$ be the restriction of $\Gamma$ and $\tilde \Gamma$ to $H$ and $\tilde H$, respectively; note that  $\pi\colon H\to HA/A$ and $\tilde\pi \colon \tilde H\to \tilde H\tilde A/\tilde A$ are isomorphisms, and we define an isomorphism $H\to \tilde H$ via
\begin{align*}
	H=H/(H\cap A)\overset{\pi}{\longrightarrow} 
		HA/A\overset{\varphi}{\longrightarrow}  
		\tilde H\tilde A/\tilde A
		\overset{\tilde\pi^{-1}}{\longrightarrow} \tilde H/\tilde H\cap \tilde A=\tilde H.
\end{align*}
Note that in defining $\tilde \pi\colon h\mapsto \tilde A h$, we identify generators of $\tilde H$ with generators of $\tilde H\tilde A/\tilde A$;  as elements of $\tilde H\tilde A/\tilde A$ are presumed throughout to be words (or SLPs) in the generators, we can compute preimages of $\tilde \pi$. This affords us an implementation of $\tilde\pi^{-1}$. 
 
Recall that $Y_i^\ast=\langle a\rangle$, and choose a generator $\tilde a\in \tilde Y_i^\ast$ such that \[a\Gamma\varphi=\tilde a\tilde\Gamma.\] We can now construct an isomorphism $\hat\varphi\colon Y^\ast\to\tilde Y^\ast$. As mentioned above, every $x\in Y^\ast$ has a unique factorization $x=h a^{e}$ where $h\in H$ and $0\leq e\leq p^2-1$. This shows that\[\hat \varphi\colon Y^\ast\to \tilde Y^\ast,\quad ha^e\mapsto h\pi\varphi\tilde \pi^{-1}\cdot \tilde a^e,\]is well-defined; clearly, $\hat \varphi$ is a bijection, so it remains to show that it is a homomorphism. We use below the important property of $\hat\varphi$ that it maps $Y^\ast_j$ to $\tilde Y^\ast_j$ for each $j$: this follows from the fact that the Hall subgroups $Q_1,\ldots,Q_n$ defining the Sylow basis $Y^\ast_1,\ldots,Y_n^\ast$ are mapped under $\hat\varphi$ to the Hall subgroups $\tilde Q_1,\ldots,\tilde Q_{i-1},\tilde H,\tilde Q_{i+1},\ldots,\tilde Q_n$ defining the Sylow basis $\tilde Y^\ast_1,\ldots,\tilde Y^\ast_n$.  

Let $x,y\in Y^\ast$ and write $x=ha^e$ and $y=ka^f$ with $h,k\in H$ and $e,f\in\{0,\ldots,p^2-1\}$. Write $(a^e)^k=ma^u$ with $m\in H$ and $u\in\{0,\ldots,p^2-1\}$, so that $xy=hk(a^e)^ka^f=(hkm)a^{u+f}$. This shows that \[(xy)\hat\varphi = x\hat\varphi\cdot y\hat\varphi \iff (\tilde a^e)^{k\pi\varphi\tilde \pi^{-1}}=m\pi\varphi\tilde\pi^{-1}\cdot \tilde a^u,\]and it remains to prove the following: for all $k\in H$ and $e\in\{0,\ldots,p^2-1\}$, if  $(a^e)^k=ma^u$ with $m\in H$, then  $(\tilde a^e)^{k\pi\varphi\tilde\pi^{-1}}=m\pi\varphi\tilde\pi^{-1}\cdot \tilde a^u$.  Recall that every $k\in H$ can be written as a product of elements in the chosen Sylow tower of $Y^\ast$, say $k=h_1\ldots h_l$ where $h_u$ and $h_v$ lie in different Sylow subgroups for $u\ne v$. We prove the claim by induction on $l$.

First, suppose $l=1$, that is, $k$ lies in a Sylow $p_j$-subgroup $Y_j^\ast\leq H$ for some $j\ne i$. It follows from Lemma \ref{lemOrd} that $Y_i^\ast Y_j^\ast=Y_j^\ast Y_i^\ast$ is a $\{p,p_j\}$-group, and there are two cases to  consider.

\begin{items}
\item[(i)] If $Y_j^\ast$ normalizes $Y_i^\ast$, then $\tilde Y_j^\ast$ normalizes $\tilde Y_i^\ast$.  We can write $(a^e)^k=a^{i}$ for a uniquely determined  $i\in\{0,\ldots,p^2-1\}$, which  yields \[(Aa)^{k\pi}=(Aa^{i\bmod p})\quad\text{and}\quad (\tilde A\tilde a)^{k\pi\varphi}=(\tilde A\tilde a^{i\bmod p}).\] Since $k$ acts on $\langle Aa\rangle$ the same way as $k\pi\varphi$ acts on $\langle \tilde A\tilde a\rangle$, it follows from Lemma \ref{lemOrd} that $k$ acts on $A$ the same way as $k\pi\varphi\tilde \pi^{-1}$ acts on $\tilde A$. Thus, if $(a^e)^k=a^i$, then $(\tilde a^e)^{k\pi\varphi\tilde\pi^{-1}}=\tilde a^i$, as claimed. 

\item[(ii)] If $Y_i^\ast$ normalizes $Y_j^\ast$, then  $\tilde Y_i^\ast$ normalizes $\tilde Y_j^\ast$. Moreover, $A=\langle a^p\rangle\leq Y_i^\ast$ and $\tilde A=\langle \tilde a^p\rangle\leq \tilde Y_i^\ast$ act trivially on $Y_j^\ast$ and on $\tilde Y_j^\ast$, respectively,  and \begin{align*}(a^e)^k&=[k,a^{-e}]a^e=[k,a^{-e\bmod p}]a^e &&\text{with}\quad [k,a^{-e\bmod p}]\in Y_j^\ast\leq H\\
(\tilde a^e)^{k\pi\varphi\tilde \pi^{-1}}&=[k\pi\varphi\tilde \pi^{-1},\tilde a^{-e\bmod p}]\tilde a^e &&\text{with}\quad [k\pi\varphi\tilde \pi^{-1},\tilde a^{-e\bmod p }]\in \tilde Y_j^\ast\leq \tilde H.
\end{align*}
Thus, it remains to show that $[k,a^{-e\bmod p}]\pi\varphi\tilde\pi^{-1}= [k\pi\varphi\tilde \pi^{-1},\tilde a^{-e\bmod p}]$. Note that  \[[k,a^{-e\bmod p}]\pi\varphi\tilde\pi^{-1}=[k\pi\varphi,\tilde A\tilde a^{-e\bmod p}]\tilde\pi^{-1},\]
and  $[k\pi\varphi\tilde\pi^{-1},\tilde a^{-e\bmod p}]\in \tilde H$ is a preimage of $[k\pi\varphi,\tilde A\tilde a^{-e\bmod p}]\in \tilde H\tilde A/\tilde A$ under the isomorphism $\tilde \pi\colon \tilde H\to \tilde H\tilde A/\tilde A$; recall that $\tilde \pi$ is the restriction of $\tilde \Gamma\colon \tilde Y^\ast\to \tilde Y$, and $\tilde \Gamma$ maps $k\pi\varphi\tilde\pi^{-1}$ and $\tilde a$ to $k\pi\varphi$ and $\tilde A\tilde a$, respectively. Thus, $(\tilde a^e)^{k\pi\varphi\tilde \pi^{-1}}= [k,a^{-e\bmod p}]\pi\varphi\tilde\pi^{-1}\cdot \tilde a^e$, as claimed.  
\end{items}
 Second, consider the induction step $l\geq 2$ and write $k=st$ such that the induction hypothesis holds for $s$ and $t$, that is, if $(a^e)^s=m_s a^{u_s}$ with $m_s\in H$, then $(\tilde a^e)^{s\pi\varphi\tilde \pi^{-1}}= m_s\pi\varphi\tilde\pi^{-1}\cdot \tilde a^{u_s}$, and that if $(a^{u_s})^t=m_t a^{u_t}$ with $m_t\in H$, then  $(\tilde a^{u_s})^{t\pi\varphi\tilde\pi^{-1}}= m_t\pi\varphi\tilde\pi^{-1}\cdot \tilde a^{u_t}$. This  yields $(a^e)^k=m_s^tm_t a^{u_t}$ with $m_s^t m_t\in H$, and therefore \[(\tilde a^e)^{k\pi\varphi\tilde \pi^{-1}}=(m_s\pi\varphi\tilde\pi^{-1})^{t\pi\varphi\tilde\pi^{-1}}\cdot m_t\pi\varphi\tilde\pi^{-1}\cdot \tilde a^{u_t}=(m_s^tm_t)\pi\varphi\tilde\pi^{-1}\cdot \tilde a^{u_t},\] as claimed. This completes the proof that $\hat\varphi$ is an isomorphism between $Y^\ast$ and $\tilde Y^\ast$.  The construction of $\hat\varphi$  only employs a finite list of polynomial-time algorithms.
\end{proof}

As explained in the beginning of this section, if the order of the cube-free group $L$ has $k$ distinct prime divisors, then the algorithm in Proposition \ref{prop:cyclicFE} has to be iterated at most $k$ times to establish an isomorphism from $L$; note that $k\leq \log |L|$. This proves the following theorem.

\begin{theorem}\label{thm:IsomSolv}
Let $L$ and $\tilde L$ be two solvable cube-free groups. Algorithm \ref{algo:lift-all} is a polynomial-time algorithm  that constructs an isomorphism $L\to \tilde L$, and reports false if and only if $L\not\cong\tilde L$. 
\end{theorem}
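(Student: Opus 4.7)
The plan is to assemble the theorem directly from the ingredients already developed. First, compute the Frattini subgroups $\Phi(L)$ and $\Phi(\tilde L)$ using the polynomial-time adaptation of Eick's algorithm described in Section~\ref{remPhi}, and form the Frattini quotients $L_\Phi$ and $\tilde L_\Phi$ via \cite{KL:quo}. Then invoke Proposition~\ref{prop:frat-free-decomp} to decompose each Frattini-free quotient into the form $K \ltimes (B\times C)$ with $K$ embedded in $\prod \GL_1(p_i)\times\prod\GL_2(p_j)$, and apply Theorem~\ref{thm:Conj-GL2s} to test whether the corresponding subgroups $K$ and $\tilde K$ are conjugate in $\Aut(B\times C)$. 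By Gasch\"utz's lemma \cite{cf}*{Lemma~9}, conjugacy of these complements is equivalent to $L_\Phi \cong \tilde L_\Phi$; if no conjugator is found, report false, which is correct since $L\cong \tilde L$ forces $L_\Phi\cong \tilde L_\Phi$. Otherwise the conjugator yields the explicit isomorphism $\varphi\colon L_\Phi \to \tilde L_\Phi$ described at the end of Section~\ref{secsolvFF}.

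Next, lift $\varphi$ through the Frattini extension. By \cite[Theorem~11]{cf} and the discussion in Section~\ref{secFES}, $L$ is obtained from $L_\Phi$ by a sequence of Frattini extensions by $C_p$'s, one for each prime divisor $p$ of $|\Phi(L)|$. Since $\Phi(L)$ is square-free and every prime dividing $|\Phi(L)|$ also divides $|L_\Phi|$, there are at most $\log|L|$ such prime factors. Iterate Proposition~\ref{prop:cyclicFE}: at each stage we have an isomorphism between two cube-free quotients, and we extend it to one more layer of the Frattini tower. After processing all the prime factors of $|\Phi(L)|$, we obtain an isomorphism $\hat\varphi\colon L\to \tilde L$.

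Correctness then reduces to two observations: (i) the existence of $\varphi\colon L_\Phi \to \tilde L_\Phi$ is equivalent to $L\cong \tilde L$, by the Frattini-determinacy result \cite[Theorem~11]{cf}; and (ii) whenever $L_\Phi\cong\tilde L_\Phi$ (so $L\cong\tilde L$), Proposition~\ref{prop:cyclicFE} actually produces the required lift. Hence Algorithm~\ref{algo:lift-all} returns an isomorphism whenever one exists and reports false otherwise. For complexity, the Frattini subgroup, Frattini quotient, the Frattini-free isomorphism test, and each Proposition~\ref{prop:cyclicFE} invocation all run in polynomial time, and we invoke the latter at most $\log|L|$ times; therefore the total running time is polynomial in the input size.

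The main technical obstacle has already been discharged by Proposition~\ref{prop:cyclicFE}: piecing together a Sylow basis of $\tilde Y^\ast$ compatible with that of $Y^\ast$ and verifying that the candidate map $h a^e \mapsto h\pi\varphi\tilde\pi^{-1}\cdot \tilde a^e$ is multiplicative was the subtle step. With that in hand, the proof of Theorem~\ref{thm:IsomSolv} is just bookkeeping over the finitely many primes dividing $|\Phi(L)|$.
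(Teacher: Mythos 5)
Your proposal is correct and follows essentially the same route as the paper: compute the Frattini subgroups, test isomorphism of the Frattini quotients via the Section~\ref{secsolvFF} machinery (Gasch\"utz plus conjugacy in $\Aut(B\times C)$), use the Frattini-determinacy result of \cite{cf} together with the equal-order hypothesis to conclude $L\cong\tilde L$ exactly when $L_\Phi\cong\tilde L_\Phi$, and then iterate Proposition~\ref{prop:cyclicFE} once per prime dividing $|\Phi(L)|$, which is at most $\log|L|$ times. This matches the paper's (briefer) argument, so no gaps to report.
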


%%%%%%%%%%%%%%%%%%%%%%%%%%%%%%%% 
\section{Proof of Theorem~\ref{thm:main} (Isomorphism testing of cube-free groups)}\label{sec:general}
We now prove our main result,  Theorem~\ref{thm:main}, by describing Algorithm \ref{algo:general}. Recall from Section \ref{secStructure} that every cube-free group has 
the form $G=A\times L$, with $L$ solvable and $A=1$ or $A={\rm PSL}_2(p)$. If $A\ne 1$, 
then $A=G^{(3)}$, the third term of the derived series of $G$, see Remark \ref{remMeta}.
We compute $G^{(3)}$ using the normal closure of commutators \cite{Seress}*{p.\ 23}; 
since membership testing in permutation groups is in deterministic polynomial time, this can be done efficiently.  Furthermore, as $G^{(3)}$ is normal, the algorithm of \cite{KL:quo}*{P6}
applies to compute $L=C_G(A)$  in polynomial time. Thus, we may decompose $G=A\times L$, and 
likewise $\tilde{G}$, in polynomial time. For Step (1) of the general algorithm, the construction of an isomorphism $\psi_A\colon A\to\tilde A$, we use the next proposition. The correctness of Algorithm \ref{algo:general} now follows from Theorem \ref{thm:IsomSolv}; together with Proposition~\ref{prop:PSL}, the runtime is polynomial in the input size. 

\begin{proposition}\label{prop:PSL}
Let $A$ be  isomorphic to a non-abelian simple group of cube-free order. 
There is a polynomial-time algorithm that returns an isomorphism $A\to {\rm PSL}_2(p)$.
\end{proposition}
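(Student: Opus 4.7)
The plan is to read off the defining prime $p$ from $|A|$ and then construct an explicit isomorphism by enumeration, which is feasible because the order of $A$ turns out to be polynomial in the input size.

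First, I would compute $|A|$ using standard permutation-group algorithms. By the classification of cube-free simple groups used in \cite{cf}, every non-abelian simple group of cube-free order is isomorphic to $\mathrm{PSL}_2(p)$ for some odd prime $p\geq 5$ with $p\pm 1$ cube-free. Since $|\mathrm{PSL}_2(p)|=p(p^2-1)/2$, the value of $p$ is uniquely determined by $|A|$; in fact it is the largest prime divisor of $|A|$ and appears to the first power.

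Next, I would bound $p$ by the input size. Because $A$ contains an element of order $p$ and every permutation on $n$ letters has order whose prime divisors are at most $n$, we have $p\leq n$, and hence $|A|=p(p^2-1)/2 \in O(n^3)$. In particular, all elements of $A$ can be listed in polynomial time. I would then construct a reference copy $T\cong\mathrm{PSL}_2(p)$ acting on the projective line $\mathbb{P}^1(\mathbb{F}_p)$ of $p+1$ points, together with a fixed short presentation $T=\langle s_0,t_0\mid R\rangle$ whose relations $R$ have length polynomial in $\log p$; such presentations for $\mathrm{PSL}_2(p)$ are classical. Looping through ordered pairs $(x,y)\in A\times A$, I would test whether $s_0\mapsto x$, $t_0\mapsto y$ satisfies every relation in $R$. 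Since $A\cong T$, such a pair exists. By von Dyck's theorem \cite{rob}*{(2.2.1)} the assignment then extends to a surjective homomorphism $\varphi\colon T\to\langle x,y\rangle\leq A$, and since $T$ is simple with $|T|=|A|$, this $\varphi$ is an isomorphism; the desired $\psi_A$ is its inverse.

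The main obstacle is selecting a presentation of $\mathrm{PSL}_2(p)$ that is both short and concretely computable from $p$, so that its relations can be written down and verified in polynomial time in $\log p$. Once this is in hand, every remaining step (computing and factoring $|A|$, enumerating the $O(n^6)$ ordered pairs, and evaluating a bounded number of short words in the permutation group $A$) is polynomial in the input size, so the whole procedure runs in polynomial time.
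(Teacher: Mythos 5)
Your proposal is correct and is in essence the paper's own argument run in the mirror direction: the paper fixes $x,y\in A$ of orders $p$ and $(p+1)/2$ (which forces $\langle x,y\rangle=A$), computes a presentation of $A$ on these two generators using the standard constructive-presentation machinery for permutation groups, and then brute-force searches the at most $n^6$ pairs of elements of those orders in the reference copy $\mathrm{PSL}_2(p)$; you instead fix a presentation of the reference copy and search pairs in $A$. Both variants rest on the same key complexity observation, namely $p\le n$ and hence $|A|=p(p^2-1)/2\le n^3$. Two remarks. First, the step you flag as the main obstacle --- a presentation of $\mathrm{PSL}_2(p)$ whose relations have length polynomial in $\log p$ --- is not actually needed: since $p\le n$, even the classical two-generator presentations (Behr--Mennicke, Todd, Sunday), whose relations involve exponents of size $O(p)$, can be written down and verified in time polynomial in the input; alternatively you could simply compute a presentation of the permutation group $T$ acting on $\mathbb{P}^1(\mathbb{F}_p)$ (or of $A$ itself, as the paper does) with standard polynomial-time permutation-group tools, so no ready-made presentation is required at all. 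Second, a pair $(x,y)$ satisfying the relations only yields a surjection $T\to\langle x,y\rangle$; since the kernel is normal in the simple group $T$, this is an isomorphism onto $A$ unless $x=y=1$, so you should exclude the trivial pair or verify $|\langle x,y\rangle|=|A|$ before inverting. With these trivial adjustments your argument is complete and matches the paper's in both spirit and complexity.
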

\begin{proof} 
 By assumption, $A\cong {\rm PSL}_2(p)$. We can determine $p$ by computing $|A|$, and then find $x,y\in A$  of order $p$ and $(p+1)/2$, respectively; note that $\langle x,y\rangle\cong \text{PSL}_2(p)$ since $x$ generates a Sylow $p$-subgroup, and $y$ generates the image in $\text{PSL}_2(p)$ of the $(p-1)$-th power of a  Singer cycle in $\text{GL}_2(p)$.  Now construct a presentation $\langle x,y\mid R\rangle$ for
$A$ from these elements. In ${\rm PSL}_2(p)$,  list all element pairs $(x',y')$  of order
$p$ and $(p+1)/2$, respectively, and search for an identification $x\mapsto x'$ and $y\mapsto y'$
that satisfies the relations $R$. Once found,  return
the result as the isomorphism. If ${\rm PSL}_2(p)$ is represented on $n$ points, then $p\leq n$ and hence $|{\rm PSL}_2(p)|\leq n^3$.  The algorithm searches $|{\rm PSL}_2(p)|^{2}\leq n^6$ pairs, so this brute-force test ends in time polynomial in the input. 
\end{proof}
 
Proposition~\ref{prop:PSL} is a shortcut, available because of our focus  on
a polynomial-time algorithm for permutation groups.  Recognizing $A\cong {\rm PSL}_2(p)$ and constructing an isomorphism has been a subject of intense research; a polynomial time solution for groups of black-box type is discussed in \cite{sukru}.

%%%%%%%%%%
\section{Examples}\label{sec:ex}
We have implemented the critical features of our algorithm in \cite{cube-free}, and we give a few demonstrations
of its efficiency in Table 1. For each test, we constructed two  (non-)isomorphic groups: 
we usually started with direct products of groups provided by {\sf GAP}'s SmallGroup 
Library, and then created isomorphic random copies $G$ and $H$ of these groups 
(by using random polycyclic generating set). For some of the groups 
we have used, Table 1  gives their size and \emph{code}; this data can be 
used to reconstruct the groups via the {\sf GAP} function {\sf PcGroupCode}. We  
applied our function {\sf IsomorphismCubefreeGroups} to find an isomorphism $G\to H$. When comparing the efficiency of our implementation with the  {\sf GAP} function 
{\sf Isomorphism\-Groups}, we have started both calculations with freshly constructed 
groups $G$ and $H$, to make sure  that previously computed data is not stored. We note that {\sf GAP} also provides a randomized function ({\sf RandomIsomorphismTest}) that attempts to decide isomorphism between finite solvable groups (given via their size and code); 
the current implementation does not return  isomorphisms. That algorithm runs exceedingly 
fast on many examples, see Table~1, but its randomized approach means it cannot be
guaranteed to detect all isomorphisms. There are some practical bottlenecks in our implementation which currently applies available libraries for pc-groups (cf.\ Section~\ref{remPhi}) and matrix groups 
(cf.\ Section \ref{sec:solvFF}). The efficiency problems for \emph{collection} 
(cf.\ \cite{collection}) become visible when larger primes are involved. (This is one reason why it takes several minutes  
to reconstruct some of the groups in Table~1 via {\sf PcGroupCode}.) 
Moreover, {\sf GAP}'s functionality for matrix groups is not yet making full 
use of the promising advances of the \emph{matrix group recognition project}. These bottlenecks are responsible for the 
long runtime of the  examples involving the 
prime $12198421$, which is large from the perspective of {\sf GAP}.  Nevertheless, as a proof of concept, these  examples  demonstrate well the efficiency of our algorithm compared to existing  methods.

\pagebreak
 
\begin{algorithm}[!htbp] 
{\small 
\caption{{\sf CyclicLift}}
\label{algo:lift-cyclic}
\begin{algorithmic}[1] 
  \Require cube-free solvable groups $Y^\ast,\tilde Y^\ast$ with $|Y^\ast|=|\tilde Y^\ast|$, subgroups $A\leq\Phi(Y^\ast)$ and $\tilde A\leq \Phi(\tilde Y^\ast)$ isomorphic to $C_p$, natural projections $\Gamma\colon Y^\ast\to Y^\ast/A$ and $\tilde \Gamma\colon \tilde Y^\ast\to \tilde Y^\ast/\tilde A$ with images $Y=Y^\ast\Gamma$ and $\tilde Y=\tilde Y^\ast\tilde \Gamma$, and an isomorphism $\varphi\colon Y\to \tilde Y$
\Ensure an isomorphism $\hat\varphi\colon Y^\ast\to\tilde Y^\ast$

\vspace*{-2ex}

\begin{tabbing}
\hspace*{1em}\=\hspace*{1em}\=\hspace*{1em}\=\hspace*{1em}\=\hspace*{1em} \\
{\bf def } {\sf CyclicLift}($Y^\ast,A,\Gamma,\tilde Y^\ast, \tilde A,\tilde \Gamma,\varphi$)\\
\>\> use Proposition \ref{prop:Sylow-tower} to get a Sylow basis $Y_1^\ast,\ldots,Y_n^\ast$ of $Y^\ast$\\
\>\> define $H=\prod\nolimits_{k\ne i} Y_k^\ast$, where $A\leq Y_i^\ast$; this is a Hall $p'$-subgroup of $Y^\ast$\\
\>\> construct $\tilde H$ as a Hall $p'$-subgroup in the preimage of $H\Gamma\varphi$ under $\tilde\Gamma$\\
\>\> construct induced isomorphisms $\pi\colon H\to H\Gamma$ and $\tilde\pi\colon\tilde H\to \tilde H\tilde\Gamma$\\
\>\> fix a generator $a$ of $Y_i^\ast$ (Lemma \ref{lemFEF}) and let $\tilde a\in \tilde Y^\ast$ be a preimage  of $a\Gamma\varphi$ under $\tilde \Gamma$\\
\>\> let $M$ be a generating set of $H$\\
\>\> define $\hat\varphi\colon Y^\ast\to \tilde Y^\ast$ by mapping each $m\in M$ to $m\pi\varphi\tilde\pi^{-1}$, and $a$ to $\tilde a$\\
\>\> \Return $\hat\varphi$
\end{tabbing}
\end{algorithmic}}
\end{algorithm}

\begin{algorithm}[!htbp] 
{\small
\caption{\sf Lift}
\label{algo:lift-all}
\begin{algorithmic}[1] 
\Require cube-free solvable groups $L$ and $\tilde L$ of the same order
\Ensure an isomorphism $\hat\varphi\colon L\to\tilde L$, or  false if $L\not\cong \tilde L$

\vspace*{-2ex}

\begin{tabbing}
\hspace*{1em}\=\hspace*{1em}\=\hspace*{1em}\=\hspace*{1em}\=\hspace*{1em} \\
{\bf def } {\sf Lift}($L,\tilde L$)\\
\> compute $\Phi(L)$ and $\Phi(\tilde L)$, see Section \ref{remPhi}\\
\> {\bf if} $|\Phi(L)|=|\Phi(\tilde L)|=1$ {\bf then}\\
\>\> use the algorithm of Section \ref{secsolvFF} to get an isomorphism $\hat\varphi\colon L\to \tilde L$, or 
return false if that fails\\
\> {\bf else}\\
\>\> decompose $\Phi(L)=Y_{p_1}\times\ldots\times Y_{p_n}$ and $\Phi(\tilde L)=\tilde Y_{p_1}\times\ldots\times \tilde Y_{p_n}$ into Sylow subgroups\\
\>\> for each $i$ define $M_i=Y_{p_i}\times\ldots\times Y_{p_n}$ and  $\tilde M_i=\tilde Y_{p_i}\times\ldots\times \tilde Y_{p_n}$, with $M_j=1=\tilde M_j$ for $j>n$\\
\>\> for each $i$ define $L_i=L/M_i$ and $\tilde L_i=\tilde L/\tilde M_i$\\
\>\> for each $i\geq 2$ define natural projections $\pi_i\colon L_{i}\to L_{i-1}$ and $\tilde \pi_i\colon\tilde L_{i}\to \tilde L_{i-1}$\\
\>\> use the algorithm of Section \ref{secsolvFF} to get an isomorphism $\hat\varphi\colon L_1\to \tilde L_1$, or 
return false if that fails\\
\>\> {\bf for }{$i=2,\ldots,n+1$} {\bf do}\\
\>\>\> set $\hat\varphi=${\sf CyclicLift}$(L_{i}, M_{i-1}/M_{i}, \pi_i,
\tilde L_{i},\tilde M_{i-1}/\tilde M_{i},\tilde\pi_i,\hat\varphi)$, which is an isomorphism $L_i\to\tilde L_i$\\
\>\> \Return $\hat\varphi$
\end{tabbing}
\end{algorithmic}}
\end{algorithm}

\begin{algorithm}[!htbp]   
{\small
\caption{\sf IsomorphismCubefreeGroups}
\label{algo:general}
\begin{algorithmic}[1]
  \Require Cube-free groups $G$ and $\tilde G$ of the same order
  \Ensure an isomorphism $\varphi: G\to\tilde G$, or false if $G\not\cong \tilde G$

\vspace*{-2ex} 
  
\begin{tabbing}
\hspace*{1em}\=\hspace*{1em}\=\hspace*{1em}\=\hspace*{1em}\=\hspace*{1em} \\
{\bf def } {\sf IsomorphismCubefreeGroups}($G,\tilde G$)\\
\> compute $A=G^{(3)}$ and $L=C_G(A)$,  as well as $\tilde A=\tilde G^{(3)}$ and $\tilde L=C_{\tilde G}(\tilde A)$\\
\> construct an isomorphism $\psi_A\colon A\to \tilde A$, or return false if $A\not\cong \tilde A$,  see Proposition \ref{prop:PSL}\\
\> construct $\psi_L=${\sf Lift}$(L,\tilde L)$, which is an isomorphism $\psi_L\colon L\to\tilde L$, or return false if $L\not\cong \tilde L$\\
\> combine $\psi_A$ and $\psi_L$ to  an isomorphism $\varphi\colon G\to \tilde G$\\
\> {\bf return} $\varphi$.
%\EndFunction
\end{tabbing}
\end{algorithmic}}
\end{algorithm}

\begin{landscape}
\begin{table}
\hspace*{-4.4cm}\begin{tabular}{lp{15.9cm}}\label{tab1} 
  {\bf size}:& $213444=2^2.3^2.7^2.11^2$ (two isomorphic groups)\\
%  code $G$:& 
%  		    17 418125 
%  		179541 425601 
%  		523719 659189 
%  		102492 660746 
%  		109057 461485 
%  		622303 888010   
%  		342700 115599\\
  %  code $H$ :& 11583655002456784105606094532480891497655368556139612087716967519026102129235652609876167762\\&554323538368330091463598061592847713514281211505099\\
  \multicolumn{2}{l}{Runtime {\sf IsomorphismCubefreeGroups}: 0.12 seconds; {\sf GAP} runtimes: 110 seconds and 0.30 seconds}\\[5ex]
  % Our IsomorphismCubefreeGroups runtime: 0.12 seconds}}\\
  %\multicolumn{2}{l}{{Our IsomorphismCubefreeGroups runtime: 0.12 seconds}}\\
  %\multicolumn{2}{l}{GAP's IsomorphismGroups runtime: 110 seconds}\\
  %\multicolumn{2}{l}{GAP's RandomIsomorphismTest runtime: 0.30 seconds}\\[2.7ex]

  {\bf size}:& $485100=2^2.3^2.5^2.7^2.11$  (two isomorphic groups)\\
%  code $G$:& 593429262880015705492906364245720579156455109482044923752450001133559\\
  %  code $H$:& 20303078334693489078100518555486778402263907001830655364638886719595730588470929519998956305\\&6266662313516910287001985183716861144914799744503329092831077728207001155339\\
   \multicolumn{2}{l}{Runtime {\sf IsomorphismCubefreeGroups}: 0.14 seconds; {\sf GAP} runtimes: 9.25 hours and 0.10 seconds}\\[5ex]
   %\multicolumn{2}{l}{{Our IsomorphismCubefreeGroups runtime: 0.14 seconds}}\\
   %\multicolumn{2}{l}{GAP's IsomorphismGroups runtime: 9.25 hours}\\
   % \multicolumn{2}{l}{GAP's RandomIsomorphismTest runtime: 0.10 seconds}\\[2.7ex]

   {\bf size}:& $2455229080695145234788= 2^2. 3^2. 7.11.17.23.29^2.59.709.2837.22697$  (two isomorphic groups)\\
  \multicolumn{2}{l}{Runtime {\sf IsomorphismCubefreeGroups}: 46 seconds; {\sf GAP} runtimes: (aborted)  and 12.85 hours}\\[1ex]
 code:& {\footnotesize25771887290058268324444222548427618466622535561188418157206222315530817636985160639832764682223398558454926208711434863233254329561285}\\&{\footnotesize7310614599377329545076424741385533019060045922880910282042489387835906289279581907750184052068613887290089849139978833781413618189}\\[1ex]
 code:& {\footnotesize42935964225237064245986914596365100273683956747676598979814176980191348433158368824756791059830426394631361311711333822038779784490919}\\&{\footnotesize1398533193638418225692067093120389360092220226273076405684036236511208423558471856377830123474389120161517062590458151937327292273539}\\[5ex]
  %\multicolumn{2}{l}{{Our IsomorphismCubefreeGroups runtime: 46 seconds}}\\
  %\multicolumn{2}{l}{GAP's IsomorphismGroups runtime: (aborted when it exceeded the 20GB memory limit)}\\
 %\multicolumn{2}{l}{GAP's RandomIsomorphismTest runtime: 12.85 hours}\\[2.7ex]

 {\bf size}:& $148801462694820=2^2.3^2.5.13^2.401.12198421$  (two isomorphic groups)\\
     \multicolumn{2}{l}{Runtime {\sf IsomorphismCubefreeGroups}: 1.34 hours; {\sf GAP} runtimes: (aborted)  and 43.02 hours}\\[1ex]
   code:& {\footnotesize33485470139896255235932843080490226656789884890216293350628774303597330608696148032177992911339898019268212938339562678223839825646765}\\&{\footnotesize36143476701568788444140906714234850667635698469843932592713738130822523580315216756068451815666063208366321490271081072566186700588774}\\&{\footnotesize041361401470419}\\[1ex]
   code:&{\footnotesize30847018874524812119899977696213501488384865133706306032190042564646975355564996929067797274360859513233682334426380109067389439911324}\\&{\footnotesize7457353858208706985108929792505014584900163986344080431816556839}\\[5ex]
%   \multicolumn{2}{l}{{Our IsomorphismCubefreeGroups runtime: 1.34 hours}}\\ 
%    \multicolumn{2}{l}{GAP's IsomorphismGroups runtime: (aborted after use of 20GB memory)}\\
   %   \multicolumn{2}{l}{GAP's RandomIsomorphismTest runtime: (43 hours)}\\[2.7ex]
   
   {\bf size}:& $11 793 441 660=2^2.3.5.7.11^2.13.17851$  (two non-isomorphic groups)\\
    \multicolumn{2}{l}{Runtime {\sf IsomorphismCubefreeGroups}: 1.00 seconds; {\sf GAP} runtimes: (aborted)  and (not applicable)}\\[1ex]
  code:&{\footnotesize130759863164212785921829892045963491290671156934582787304559199096594506157779256403437080197441699}\\[1ex]
  code:&{\footnotesize140818314176844538685084602259218084360152269575198837414544118817517722796914042489570754478966750}\\[3.5ex]   
 % \multicolumn{2}{l}{{Our IsomorphismCubefreeGroups runtime: 1.00 seconds}}\\
 % \multicolumn{2}{l}{GAP's IsomorphismGroups runtime: (aborted after 20GB memory limit)}\\
 % \multicolumn{2}{l}{GAP's RandomIsomorphismTest runtime: Test Not Applicable}\\[2.7ex]
  \end{tabular}    
  \caption{Comparison of runtimes of isomorphism tests for some cube-free groups; {\sf GAP} runtimes are given for the {\sf GAP} functions {\sf IsomorphismGroups} and {\sf RandomIsomorphismTest} (in that order); we aborted computations which used more that 20GB of memory}    
\end{table}
\end{landscape}

{\small
%%%%%%%%%%%%%%%%%%%%%%%%%%%%%%%%%%%%%%%%%%%%%%%%%%%%%%%%%%%%%%%%%%%%%%%%%%%%%
\bibliographystyle{abbrv}

}

\end{document}